\theoremstyle{definition}
\newtheorem{thm}{Theorem}[section]
\newtheorem{defn}[thm]{Definition}
\newtheorem{rem}[thm]{Remark}
\newtheorem{prop}[thm]{Proposition}
\newtheorem{exa}[thm]{Example}
\newtheorem{theorem}{Theorem}[section]
\newtheorem{lemma}[theorem]{Lemma}
\renewcommand{\theequation}{\thesection.\arabic{equation}}
\newcommand{\f}{\frac}
\newcommand{\ra}{\rightarrow}
\newcommand{\beq}{\begin{equation}}
\newcommand{\eeq}{\end{equation}}
\newcommand{\beqa}{\begin{eqnarray}}
\newcommand{\eeqa}{\end{eqnarray}}
\newcommand{\bit}{\begin{itemize}}
\newcommand{\eit}{\end{itemize}}
\newcommand{\bedef}{\begin{defn}}
\newcommand{\edefn}{\end{defn}}
\newcommand{\bpro}{\begin{prop}}
\newcommand{\epro}{\end{prop}}
\newcommand{\Dx}{\Delta x}
\newcommand{\Dt}{\Delta t}
\newcommand{\df}{\partial}
\newcommand{\ep}{\epsilon}
\newcommand{\rd}{\,\mathrm{d}}
\def\mQ{\mathcal{Q}}
\def\mT{\mathcal{T}}
\def\mB{\mathcal{B}}
\numberwithin{equation}{section}
\begin{document}

\baselineskip=1.8pc


\begin{center}
{\bf 
Accuracy and stability analysis of the Semi-Lagrangian method for\\ stiff hyperbolic relaxation systems and kinetic BGK model
}
\end{center}

\vspace{.1in}
\centerline{
Mingchang Ding
\footnote{Computational Mathematics Science and Engineering, Michigan State University, East Lansing, MI 48824, USA. E-mail: dingmin2@msu.edu.}
,
Jing-Mei Qiu
\footnote{Department of Mathematical Sciences, University of Delaware, Newark, DE, 19716, USA. E-mail: jingqiu@udel.edu. Research of the first and second author is supported by NSF grant NSF-DMS-1834686 and NSF-DMS-1818924 (Program manager: Dr.~Leland M. Jameson), Air Force Office of Scientific Research FA9550-18-1-0257 (Program manager: Dr.~Fariba Fahroo).}
,
Ruiwen Shu
\footnote{Department of Mathematics, University of Maryland, College Park, MD, 20742, USA. E-mail: rshu@cscamm.umd.edu}
}


\centerline{\bf Abstract}

In this paper, we develop a family of third order asymptotic-preserving (AP) and asymptotically accurate (AA) diagonally implicit Runge-Kutta (DIRK) time discretization methods for the stiff hyperbolic relaxation systems and kinetic Bhatnagar-Gross-Krook (BGK) model in the semi-Lagrangian (SL) setting. The methods are constructed based on an accuracy analysis of the SL scheme for stiff hyperbolic relaxation systems and kinetic BGK model in the limiting fluid regime when the Knudsen number approaches $0$. An extra order condition for the asymptotic third order accuracy in the limiting regime is derived. Linear Von Neumann stability analysis of the proposed third order DIRK methods are performed to a simplified two-velocity linear kinetic model. Extensive numerical tests are presented to demonstrate the AA, AP and stability properties of our proposed schemes. 

\noindent {\bf Keywords:}
Hyperbolic relaxation system; BGK model; semi-Lagrangian (SL) method;  diagonally implicit Runge-Kutta (DIRK) method; asymptotic-preserving; asymptotic accuracy; Von Neumann analysis.

\newpage

\section{Introduction}

The models of interests in this paper include stiff hyperbolic relaxation models and the BGK model, the latter of which was introduced by Bhatnagar, Gross and Krook \cite{bhatnagar1954model} as a relaxation model for the Boltzmann equation. In these models, a stiffness parameter $\ep$ characterizes multi-scale regime of the model. For example, in the BGK model, $\ep$ is the dimensionless Knudsen number, which provides the measurement of collision frequency, defined as $\ep = \lambda/L$ with mean free path $\lambda$ and macroscopic characteristic length $L$. The BGK model is in the kinetic regime with $\ep = O(1)$ for rarefied gas, and is in the hydrodynamic regime with $\ep \ll 1$. In the hydrodynamic regime, the BGK model gives a macroscopic model such as the Euler system as $\ep$ approaches $0$,  by the classical Champan-Enskog expansion \cite{cercignani1988boltzmann, chapman1990mathematical}. In this paper, we consider development of high order asymptotic preserving (AP) and asymptotic accurate (AA) numerical methods for the above mentioned multi-scale models. 

Due to the stiffness of the collision term, an explicit time discretization would require the time step $\Delta t$ to be $O(\epsilon)$, which is very expensive if $\epsilon$ is small. To avoid this time step restriction, people prefer to use AP schemes \cite{Jin99} which automatically becomes a consistent numerical method for the limiting macroscopic model as $\epsilon\rightarrow0$. As a result, one can take $\Delta t$ independent of $\epsilon$, and the computational cost is dramatically decreased in the hydrodynamic regime, compared to explicit schemes. One also prefers schemes with the AA property, meaning that the scheme remains its optimal order in the hydrodynamic regime. High order AP / AA schemes for hyperbolic relaxation models and stiff kinetic equations have been developed based on various frameworks, such as implicit-explicit (IMEX) Runge-Kutta (RK) methods \cite{Jin95, JX95, ascher1997implicit, pareschi2005implicit, pieraccini2007implicit, FJ10, dimarco2013asymptotic, hu2018asymptotic}, IMEX-multistep methods \cite{ARW95, HR07, DP17, ADP19}, exponential-RK methods \cite{DP11,LP14, HS18, hu2018second} and semi-Lagrangian methods \cite{santagati2011new, groppi2014high, boscarino2019high}.
 
For transport problems, the semi-Lagrangian (SL) method is often designed via tracking solutions along characteristics of transport terms in time, thus easing the classical time step restrictions. Because of its computational efficiency due to large time stepping sizes, the SL method is popular in climate modeling \cite{lin1996multidimensional, staniforth1991semi} and plasma simulations \cite{sonnendrucker1999semi}. When handling source/diffusion/relaxation terms in the SL framework, these terms are often integrated in time along characteristics by explicit, implicit or IMEX RK or multistep type time integrators. Yet, study on asymptotic accuracy of the high order time integrator in the SL framework when applied to stiff hyperbolic systems is still lacking. In fact, it is numerically observed that a third order diagonally implicit RK (DIRK) method, when applied in the SL framework,  suffers from accuracy degeneracy, with only second order temporal convergence observed in the asymptotic limit, see in \cite{santagati2011new}. 

In this paper, our first goal is to study the order reduction phenomenon in the limiting regime. In particular, we conduct the accuracy analysis of DIRK methods for both kinetic and fluid regimes in the SL setting. Our accuracy analysis is semi-discrete in the sense that we only discretize in time, but keep the spatial operations at the continuous level. From the accuracy analysis, we find that the traditional order conditions of first order backward Euler method and second order DIRK (DIRK2) method \cite{ascher1997implicit} still hold in both the kinetic and fluid regimes, while one extra order condition needs to be imposed to ensure the third order accuracy in the limiting fluid regime. Taking this observation into consideration, we construct several new DIRK3 methods. We also study linear stability of high order DIRK schemes, in the SL framework, by performing the Von Neumann analysis to a linear two-velocity kinetic model with relaxation term, from which we choose a third order accurate DIRK method in both kinetic and fluid regimes and with best linear stability property and robustness. There are open issues remain to be addressed in the future work: one is to optimize the stability property of AA DIRK3 methods in the SL framework; another is that our current stability analysis is performed in the semi-discrete sense with spatial operations kept at continuous level. Fully discrete stability analysis is very involved, especially with the change of stencil around the feet of characteristics when time stepping sizes are larger than the CFL limit. In fact, numerical instability is observed for the linear two-velocity model when the time stepping size is larger than CFL limit; while high order temporal convergence is observed for the nonlinear BGK model with large time stepping sizes. 

The rest of the paper is organized as follows. In Section~\ref{sec:problems}, we introduce our models of interests, i.e. the stiff hyperbolic relaxation systems and the BGK model. In Section~\ref{sec:new_dirk}, the SL scheme using DIRK for integration of stiff relaxation terms along characteristics is introduced. Section~\ref{accuracy_analysis} is devoted to the accuracy analysis of the DIRK methods, integrating along characteristics, in the limiting fluid regime. In particular, an extra order condition for third order DIRK schemes is derived and new Butcher tableaus are constructed accordingly. The linear stability of DIRK methods is studied in Section~\ref{sec:stanalysis}. In Section~\ref{sec:test}, we show the asymptotic accuracy and stability property of DIRK schemes, when coupled with SL methods, via several stiff hyperbolic relaxation models.  Conclusions are given in Section~\ref{sec6}.

\def\mQ{\mathcal{Q}}

\section{Stiff hyperbolic relaxation systems and BGK model}
\label{sec:problems}

We consider
\beq\label{eq:1}
\partial _{t}f + v\cdot \nabla _{x}f=  \frac{1}{\ep}\mQ(f), 
\eeq
where $f = f(x, v, t)$ is the probability density function (PDF) of particles that depends on time $t$, position $x \in \Omega_x$ and velocity $v \in \Omega_v$, $\ep > 0$ is a dimensionless parameter. $\mQ(f)$ is the relaxation operator that describes the interactions between particles. In particular, $\mQ$ could be linear/nonlinear stiff relaxation terms for the following cases:

\bit

\item Linear two-velocity model\footnote{This is indeed the same model as eq. (1.1) of [HuShu19'IMEX-BDF] under the change of variable $f_1 = \f{u+v}{2}, f_2 = \f{u-v}{2}$}
\begin{equation}\label{twovelo}
\begin{cases}
& \partial_t f_1 + \partial_x f_1 = \frac{1}{2\epsilon}(b(f_1+f_2)-(f_1-f_2)) \\
& \partial_t f_2 - \partial_x f_2 = -\frac{1}{2\epsilon}(b(f_1+f_2)-(f_1-f_2)) \\
\end{cases}.
\end{equation}
where $|b|<1$ is a constant. 
\eqref{twovelo} can be further written into
\beq
\partial_t f + v \partial_x f = \f{1}{\ep} \mQ(f)
\eeq
of the same form as \eqref{eq:1}. Here $v \in \Omega_v = \{1,-1\}$ with equal integration weights, $f(t, x, \cdot) = (f_1,f_2)^T$ and the collision operator
\beq \label{Q_blinear}
\mQ(f) = 
\begin{pmatrix}
\frac{1}{2}(b(f_1+f_2)-(f_1-f_2))	\\
-\frac{1}{2}(b(f_1+f_2)-(f_1-f_2))
\end{pmatrix}.
\eeq
When $\mQ(f) = 0$, the equilibrium $M_U(t, x) = (M_1, M_2)^T$ is
\beq \label{Mu_blinear}
f = M_U = 
\begin{pmatrix}
\frac{1+b}{2}U \\
\frac{1-b}{2}U \\
\end{pmatrix}
,\quad U = \langle f \phi\rangle = f_1+f_2
\eeq
with one collision invariant
\beq \label{eq:phi_2v}
\phi = \phi(v):=1, \quad \langle g \rangle: = \sum_{v\in \Omega_v} g(v).
\eeq

\item The considered BGK model
\beq \label{eq: bgk}
\partial_{t}f + v\cdot \nabla _{x}f =  \frac{1}{\ep}(M_U-f)
\eeq
where $f = f(x, v, t)$ is the distribution function of particles that depends on time $t > 0$, position $x \in \Omega_x$ and velocity $v \in \mathbb{R}^d$ with $d \ge 1$. Note that the BGK model is also of form \eqref{eq:1} if $\mQ(f) = M_U - f$ . $M_U$ is the local Maxwellian defined by
\beq \label{maxwellian}
M_U = M_U(x, v, t) = \f{\rho(x, t)}{(2\pi T(x, t))^{d/2}}\exp \left(-\f{|v-u(x, t)|^2}{2T(x, t)} \right)
\eeq
where $\rho$, $u$, $T$ represent the macroscopic density, the mean velocity and the temperature respectively. They are computed by
\[
\rho = \int_{\mathbb{R}^d} f\ dv,\quad u = \f{1}{\rho}\int_{\mathbb{R}^d} f\cdot v\ dv,\quad T = \f{1}{d\rho}\int_{\mathbb{R}^d} f\vert v - u\vert^2\ dv.
\]
The macroscopic fields $U$ with the components of the density, momentum and energy is obtained by taking the first few moments of $f$:
\beq \label{eq_U_bgk}
U:=\left(
\rho,
\rho u,
E
\right)^\top = 
\langle f \phi \rangle
\eeq
with 
\beq \label{eq:phi_bgk}
\phi=\phi(v):= \left( 1,v, \frac12 |v|^2 \right)^\top, \quad \langle g \rangle: = \int_{\mathbb{R}^d} g(v) dv.
\eeq
The total energy $E$ is related to $T$ through $E=\frac12\rho|u|^2 +\frac{d}{2}\rho T$. It is easy to check that $\langle M_U \phi  \rangle 
= U $. 
Hence with~\eqref{eq_U_bgk}, we see that
\begin{equation} \label{eq:Mu_f}
\langle (M_U - f) \phi \rangle =0,
\end{equation}
namely the BGK operator satisfies the conservation of mass, momentum and energy. Moreover, it enjoys the entropy dissipation: $\langle (M_U - f) \log f\rangle \leq 0$. See \cite{cercignani1969mathematical, cercignani1988boltzmann, cercignani2000rarefied} for more details of the BGK model.

\eit

We start with rewriting the general stiff relaxation equation~\eqref{eq:1} along characteristics while conducting discretization by a SL scheme
\beq \label{eq: bgk_sl1}
\f{\rd f}{\rd t}\doteq \partial _{t}f+v\cdot \nabla _{x}f =  \frac{1}{\ep}\mQ(f), 
\eeq
where $\f{\rd }{\rd t}$ is the material derivative along characteristics and $\mQ(f)$ satisfies the following properties:
\begin{itemize}

\item Collision invariants: there exists $\phi(v):=(\phi_1(v),\dots \phi_K(v))^T$ such that
\beq
\langle \mQ(f)\phi \rangle = 0,\quad \langle g \rangle := \int_{\Omega_v} g(v)\rd{v}.
\eeq
Recall that we have $\phi(v) = 1$ with $K = 1$ in~\eqref{eq:phi_2v} for the linear two-velocity model and $\phi(v) = \left(1, v, \frac12 |v|^2\right)^\top$ with $K = 3$ in~\eqref{eq:phi_bgk} for the BGK model.

\item Equilibrium: in the stiff limit as $\ep \ra 0$,
\begin{equation}
\mQ(f) = 0\quad \Leftrightarrow \quad f = M_U[f] = M_U[U]
\end{equation}
where $U = \langle f \phi\rangle$ are the moments of $f$, and $M_U$ only depends on $f$ through the moments $U$.
\end{itemize}
The hyperbolic relaxation system~\eqref{twovelo} and BGK model~\eqref{eq: bgk} clearly belong to \eqref{eq: bgk_sl1} with $\mQ(f) = M_U - f$. 

From now on, we shall restrict ourselves to the 1D in both space and velocity case and focus on the abstract ODE
\beq\label{ode_Q}
\f{\rd}{\rd{t}}f = \mQ (f),
\eeq
where $f$ lies in a Banach space. Note that in~\eqref{ode_Q}, we include $\frac{1}{\ep}$ in the relaxation operator $\mQ$ for notational convenience of the accuracy analysis in the next section.

\section{New DIRK methods for \eqref{ode_Q}}
\label{sec:new_dirk}

Due to the stiffness in~\eqref{ode_Q} when $\ep \ra 0$ and the consideration of asymptotic preservation property, it is natural for us to choose the DIRK method as the time integration method in the SL scheme, e.g. see the SL nodal discontinuous Galerkin scheme \cite{ding2021semi}. However, when investigating the proposed schemes in the limiting fluid regime,  order reduction is sometimes numerically observed. For example, only second order temporal convergence is observed when a classical $3$-stage DIRK3 method is used for integration along characteristics, as shown in Figures~\ref{fig:test1_temp_order}. Such order reduction motivates us to perform accuracy analysis when $\ep \ll 1$. In this section, we present accuracy analysis in the fluid regime, as well as the procedure of constructing high order AA DIRK methods, for solving~\eqref{ode_Q}. To the best of our knowledge, this is the first time that the extra order condition is recognized and truly third order DIRK discretization methods for the limiting fluid regime are constructed in the SL setting.

\subsection{The standard DIRK methods}
\label{subsec:bgk}

In the scope of our current work, we keep physical and phase space continuous and consider only the DIRK discretization of the collision operator $\mQ$ along characteristic lines. Assume a DIRK method with $s$ stages is characterized by the Butcher tableau
\begin{tabular}{c|c}
{\bf c}	&	A     \\
\hline
		& ${\bf b}^T$
\end{tabular}
with invertible $A= (a_{i,j}) \in \mathbb{R}^{s \times s}$, intermediate coefficient vector ${\bf c} = [c_1, \cdots c_{s}]^T$, and quadrature weights ${\bf b}^T = [b_1, \cdots b_{s}]$. For the AP property, we consider only stiffly accurate (SA) DIRK method, i.e. $c_s = 1$ and $A(s, :)={\bf b}^T$.

For simplicity of notation, let $f^n = f^n(x, v)$ and $f^{(k)} = f^{(k)}(x, v)$ to be the numerical solution at time $t^n$ and the intermediate numerical solution at $t^{(k)} = t^n + c_k\Delta t$, $k = 1, \cdots s$ respectively. Apply a DIRK method in the above Butcher tableau to~\eqref{ode_Q}, the internal $f^{(k)}$ is given by, 
\beq\label{fsch}
\textbf{The f scheme: }
f^{(k)} = f^n + \Delta t\sum_{j=1}^{k-1}a_{k j}\mQ(f^{(j)}) + \Delta t a_{kk}\mQ(f^{(k)}), \quad  k =1, \cdots s
\eeq
where the time step $\Delta t = t^{n+1} - t^n$. Due to the SA property, $f^{n+1} = f^{(s)}$. Note that, in the f scheme, $f^n$ and $f^{(j)}$ appeared in the $k$-th equation are evaluated at characteristic feet $(x-c_k v\Delta t, v, t^n)$ and $(x - (c_k - c_j) v\Delta t, v, t^{(j)})$ respectively. More specifically, we refer to \cite{ding2021semi} for more details about how to use the SL NDG scheme for approximating $f^n \approx f(x-c_k v\Delta t, v, t^n)$ and $f^{(j)} \approx f(x-(c_k-c_j) v\Delta t, v, t^{(j)})$.

In the following Lemma~\ref{lemma:fsch_shu}, we rewrite \eqref{fsch} in the Shu-Osher form \cite{shu1988efficient}, which offers convenience for deriving order conditions in the limiting fluid regime. 
\begin{lemma} [The f scheme in Shu-Osher form]
\label{lemma:fsch_shu}
Assume $a_{kk}>0$. Then \eqref{fsch} is equivalent to 
\beq \label{fsch1}
f^{(k)} = (1-\sum_{j=1}^{k-1} b_{k j})f^n + \sum_{j=1}^{k-1}b_{k j}f^{(j)} + \Delta t a_{kk}\mQ(f^{(k)}),	\quad k = 1, \cdots s
\eeq
where the coefficients $b_{kj}$ are given by the iterative relation
\beq \label{bkj}
b_{kj} = \frac{a_{kj}}{a_{jj}}-\sum_{l=j+1}^{k-1}\frac{a_{kl}b_{lj}}{a_{ll}},\quad k>j\ge 1.
\eeq
\end{lemma}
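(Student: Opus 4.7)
The plan is to prove the equivalence by strong induction on the stage index $k$, using the positivity of $a_{jj}$ to solve for $\Delta t\, \mQ(f^{(j)})$ out of each previous stage equation and substitute into the explicit sum $\Delta t\sum_{j=1}^{k-1}a_{kj}\mQ(f^{(j)})$ in \eqref{fsch}. The content of the lemma is really an algebraic identity: both \eqref{fsch} and \eqref{fsch1} are implicit equations for $f^{(k)}$ given the same data $f^n,f^{(1)},\dots,f^{(k-1)}$, so it suffices to show they are the same equation.

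First, I would introduce the shorthand $T_k := \Delta t\sum_{j=1}^{k-1} a_{kj}\mQ(f^{(j)})$, so that \eqref{fsch} reads $f^{(j)} = f^n + T_j + \Delta t\, a_{jj}\mQ(f^{(j)})$. Using $a_{jj}>0$, this inverts to
\[
\Delta t\, \mQ(f^{(j)}) = \f{1}{a_{jj}}\bigl(f^{(j)} - f^n - T_j\bigr).
\]
Substituting into the definition of $T_k$ gives
\[
T_k = \sum_{j=1}^{k-1}\f{a_{kj}}{a_{jj}}(f^{(j)}-f^n) \;-\; \sum_{j=1}^{k-1}\f{a_{kj}}{a_{jj}}\,T_j.
\]

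Next, I would invoke the induction hypothesis $T_j = \sum_{i=1}^{j-1}b_{ji}(f^{(i)}-f^n)$ for every $j<k$, and swap the order of summation in the resulting double sum, with the index relabelling $i\mapsto j$, $j\mapsto l$:
\[
\sum_{j=1}^{k-1}\f{a_{kj}}{a_{jj}}\sum_{i=1}^{j-1} b_{ji}(f^{(i)}-f^n) \;=\; \sum_{j=1}^{k-2}\Bigl(\sum_{l=j+1}^{k-1}\f{a_{kl}b_{lj}}{a_{ll}}\Bigr)(f^{(j)}-f^n).
\]
Combining the two sums then produces $T_k = \sum_{j=1}^{k-1} b_{kj}(f^{(j)}-f^n)$ with $b_{kj}$ defined exactly by \eqref{bkj} (the inner sum being empty when $j=k-1$, recovering $b_{k,k-1}=a_{k,k-1}/a_{k-1,k-1}$). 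Plugging $T_k$ back into \eqref{fsch} regroups $f^n$ and the $f^{(j)}$'s and yields \eqref{fsch1}. The base case $k=1$ is immediate, as $T_1$ is an empty sum and both schemes reduce to $f^{(1)} = f^n + \Delta t\, a_{11}\mQ(f^{(1)})$.

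The main bookkeeping step, and the only point that requires care, is the summation swap above together with the index relabelling; getting the limits right is what makes the recursion \eqref{bkj} fall out cleanly. Everything else is a direct substitution enabled by the hypothesis $a_{jj}>0$ (which is used to divide by $a_{jj}$). I do not anticipate any analytic difficulty, as the argument is purely algebraic and the $\mQ$-evaluations never need to be expanded.
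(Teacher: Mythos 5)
Your proof is correct and follows essentially the same route as the paper's: induction on $k$, solving each previous stage equation for $\Delta t\,\mQ(f^{(j)})$ using $a_{jj}>0$, substituting into the explicit sum, and swapping/relabelling the summation indices to read off the recursion \eqref{bkj}. The only cosmetic difference is that you package the explicit part as $T_k$ and invoke the induction hypothesis in the form $T_j=\sum_{i<j}b_{ji}(f^{(i)}-f^n)$, which is an equivalent rearrangement of the paper's substitution.
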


\begin{proof}
We use induction on $k$. For $k=1$, both (\ref{fsch}) and (\ref{fsch1}) gives 
\begin{equation}
f^{(1)} = f^n+ \Delta t a_{kk} \mQ(f^{(1)}).
\end{equation}
Assume (\ref{fsch}) and (\ref{fsch1}) are equivalent for $f^{(j)}$ with $j=1, \cdots k-1$. 
Then by solving for $\Delta t  \mQ(f^{(j)})$ from (\ref{fsch1}) for $f^{(j)}$ we get
\begin{equation}
\Delta t  \mQ(f^{(j)}) = \frac{1}{a_{jj}}\left( f^{(j)} -  (1-\sum_{l=1}^{j-1}b_{jl})f^n -  \sum_{l=1}^{j-1}b_{jl}f^{(l)} \right).
\end{equation}
Then substitute into (\ref{fsch}) for $f^{(k)}$ gives
\begin{equation}\begin{split}
f^{(k)} 
= & f^n + \sum_{j=1}^{k-1}\frac{a_{kj}}{a_{jj}}\Big( f^{(j)}  -  (1-\sum_{l=1}^{j-1}b_{jl})f^n  -  \sum_{l=1}^{j-1}b_{jl}f^{(l)} \Big) + \Delta t a_{kk} \mQ(f^{(k)}) \\
= & \left(1-\sum_{j=1}^{k-1}\frac{a_{kj}}{a_{jj}}+\sum_{j=1}^{k-1}\frac{a_{kj}}{a_{jj}}\sum_{l=1}^{j-1}b_{jl}\right)f^n + \sum_{j=1}^{k-1}\frac{a_{kj}}{a_{jj}} f^{(j)}  -  \sum_{j=1}^{k-1}\frac{a_{kj}}{a_{jj}}\sum_{l=1}^{j-1}b_{jl}f^{(l)} + \Delta t a_{kk} \mQ(f^{(k)}) \\
= & \left(1-\sum_{j=1}^{k-1}\frac{a_{kj}}{a_{jj}}+\sum_{l=1}^{k-2}\sum_{j=l+1}^{k-1}\frac{a_{kj}}{a_{jj}}b_{jl}\right)f^n + \sum_{j=1}^{k-1}\frac{a_{kj}}{a_{jj}} f^{(j)}  -  \sum_{l=1}^{k-2}\sum_{j=l+1}^{k-1}\frac{a_{kj}}{a_{jj}}b_{jl}f^{(l)} + \Delta t a_{kk} \mQ(f^{(k)}) \\
= & \left(1-\sum_{j=1}^{k-1}\frac{a_{kj}}{a_{jj}}+\sum_{j=1}^{k-2}\sum_{l=j+1}^{k-1}\frac{a_{kl}}{a_{ll}}b_{lj}\right)f^n + \sum_{j=1}^{k-1}\frac{a_{kj}}{a_{jj}} f^{(j)}  -  \sum_{j=1}^{k-2}\sum_{l=j+1}^{k-1}\frac{a_{kl}}{a_{ll}}b_{lj}f^{(j)} + \Delta t a_{kk} \mQ(f^{(k)}) \\
= &  (1-\sum_{j=1}^{k-1} b_{kj})f^n + \sum_{j=1}^{k-1}b_{kj}f^{(j)} + \Delta t a_{kk}\mQ(f^{(k)}),
\end{split}\end{equation}
where we changed summation order in the third equality, and exchange the summation indices $j$ and $l$ in the fourth equality. The result is exactly \eqref{fsch1}.
\end{proof}


\subsection{Accuracy analysis in the limiting fluid regime} 
\label{accuracy_analysis}

In this section, we derive order conditions of DIRK schemes (up to third order) for the limiting fluid regime in the SL framework. The desired order conditions are obtained by first performing the Taylor expansion of the exact and numerical solutions up to order $\Delta t^4$ for both $\ep = O(1)$ and $\ep \ll 1$; then equating the coefficients of the corresponding high order terms. Uniform accuracy of the SL schemes in the intermediate regime when the spatial mesh size $\Dx = O(\ep)$ are beyond the scope of our current work. We refer to \cite{butcher1987numerical, wanner1996solving} for more derivation details of the order conditions for RK methods.

Hence, assuming the initial data is {\it well-prepared} (or {\it consistent}), i.e., $f^0 = M[f^0]$ if $\epsilon << 1$,\footnote{If the initial data is not well-prepared, then \eqref{fsch} may reduce to first order. This is similar to the situation of IMEX schemes of type CK. See Theorem 3.6 in \cite{dimarco2013asymptotic} and the discussion afterwards.} 
we will show that
\begin{itemize}

\item The first and second order accuracy of \eqref{fsch1} in the kinetic regime imply the corresponding accuracy in the fluid regime. See Theorem~\ref{thm2}.

\item The third order accuracy of \eqref{fsch1} in the kinetic regime {\it does not} imply its third order accuracy in the fluid regime. 
In fact, assuming its third order accuracy in the kinetic regime, one needs one more order condition to guarantee the third order accuracy in the fluid regime. See Theorem~\ref{thm3}. In particular, this leads to the order degeneracy of the classical $3$-stage DIRK3 method in Table~\ref{tab_dirk3} mentioned previously when the SL NDG method \cite{ding2021semi} is applied to the BGK equation~\eqref{eq: bgk} with small $\ep$.

\end{itemize}


\subsubsection{The underlying DIRK scheme in the kinetic regime (the f scheme)}

To analyze the accuracy of the f scheme \eqref{fsch1}, we use  $\mQ'$ and $\mQ''$, the first and second order Fr\'echet derivatives of $\mQ$, defined by
\begin{equation}
\mQ'(f)g = \lim_{\delta\rightarrow 0} \frac{\mQ(f+\delta g)-\mQ(f)}{\delta},\quad \mQ''(f)(g_1,g_2) = \lim_{\delta\rightarrow 0} \frac{\mQ'(f+\delta g_1)g_2 - \mQ'(f)g_2}{\delta}.
\end{equation}
There holds the Taylor expansion
\begin{equation}
\mQ(f + \delta g) = \mQ(f) + \delta \mQ'(f)g + \frac{1}{2}\delta^2\mQ''(f)(g, g) + O(\delta^3)
\end{equation}
for $\delta$ small.

By induction, it is straightforward to show the following Taylor expansion for \eqref{fsch1}, or equivalently, \eqref{fsch}:
\begin{lemma}\label{lem_cd}
The $f^{(k)}$ given by (\ref{fsch}) satisfies
\begin{equation}\label{fourier_cd}\begin{split}
f^{(k)} = & f^n + c_k \Delta t \mQ(f^n) + d_k \Delta t^2 \mQ'(f^n)\mQ(f^n) \\
& + \Delta t^3\Big( g_k \mQ''(f^n)(\mQ(f^n),\mQ(f^n)) + h_k\mQ'(f^n)\mQ'(f^n)\mQ(f^n) \Big) + O(\Delta t^4),
\end{split}\end{equation}
where the coefficients $c_k,d_k,g_k,h_k$ satisfy the iterative relations
\begin{subequations}\label{lem_cd_1}
\begin{equation}\label{lem_cd_1_1}
c_k = \sum_{j=1}^{k-1}b_{kj}c_j + a_{kk} ,
\quad 
d_k = \sum_{j=1}^{k-1}b_{kj}d_j + a_{kk} c_k ,
\end{equation}
\begin{equation}\label{lem_cd_1_3}
g_k = \sum_{j=1}^{k-1}b_{kj}g_j + \frac{1}{2}a_{kk}c_k^2 ,
\quad
h_k = \sum_{j=1}^{k-1}b_{kj}h_j + a_{kk}d_k .
\end{equation}
\end{subequations}
\end{lemma}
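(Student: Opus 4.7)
The plan is to proceed by induction on $k$, using the Shu-Osher form \eqref{fsch1} from Lemma~\ref{lemma:fsch_shu}, since that form isolates the implicit piece $\Delta t\,a_{kk}\mQ(f^{(k)})$ and makes the lower stages enter only as convex-type combinations with weights $b_{kj}$. The base case $k=1$ is the implicit equation $f^{(1)}=f^n+\Delta t\,a_{11}\mQ(f^{(1)})$. I would bootstrap: observe $f^{(1)}-f^n=O(\Delta t)$, then Taylor-expand $\mQ(f^{(1)})$ about $f^n$ using the formulas for $\mQ'$ and $\mQ''$ given in the excerpt, and substitute back iteratively. This produces $c_1=a_{11}$, $d_1=a_{11}^2$, $g_1=\tfrac12 a_{11}^3$, $h_1=a_{11}^3$, which match \eqref{lem_cd_1} after interpreting the empty sum $\sum_{j=1}^{0}$ as zero.

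For the inductive step, suppose \eqref{fourier_cd} holds for $j=1,\dots,k-1$. I would introduce undetermined coefficients by writing
\begin{equation*}
f^{(k)} = f^n + \Delta t\,A_k + \Delta t^2 B_k + \Delta t^3 C_k + O(\Delta t^4),
\end{equation*}
and expand the implicit term to the required order,
\begin{equation*}
\mQ(f^{(k)}) = \mQ(f^n) + \Delta t\,\mQ'(f^n)A_k + \Delta t^2\!\left(\mQ'(f^n)B_k + \tfrac12\mQ''(f^n)(A_k,A_k)\right) + O(\Delta t^3).
\end{equation*}
Substituting this and the inductive expansions for $f^{(j)}$ into \eqref{fsch1}, and matching powers of $\Delta t$, I would successively read off $A_k=(\sum_j b_{kj}c_j+a_{kk})\mQ(f^n)=c_k\mQ(f^n)$ at order $\Delta t$, and $B_k=(\sum_j b_{kj}d_j+a_{kk}c_k)\mQ'(f^n)\mQ(f^n)=d_k\mQ'(f^n)\mQ(f^n)$ at order $\Delta t^2$, giving \eqref{lem_cd_1_1}.

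At order $\Delta t^3$, the contribution from the implicit piece is $a_{kk}\bigl(\mQ'(f^n)B_k+\tfrac12\mQ''(f^n)(A_k,A_k)\bigr)$. Using the just-established identities $A_k=c_k\mQ(f^n)$ and $B_k=d_k\mQ'(f^n)\mQ(f^n)$, this reduces to $a_{kk}d_k\,\mQ'(f^n)^2\mQ(f^n)+\tfrac12 a_{kk}c_k^2\,\mQ''(f^n)(\mQ(f^n),\mQ(f^n))$. Adding $\sum_j b_{kj}$ times the inductive expansions of the $f^{(j)}$ and collecting the two elementary differentials yields exactly $g_k$ and $h_k$ as in \eqref{lem_cd_1_3}.

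The technical heart of the argument is the bootstrapping in the implicit stage: although $\mQ(f^{(k)})$ is only known implicitly, the expansion up to $O(\Delta t^3)$ of $\Delta t\,a_{kk}\mQ(f^{(k)})$ requires $f^{(k)}$ only to order $\Delta t^2$, so the coefficients $A_k$, $B_k$, $C_k$ can be solved for in this strict order. The main thing to verify carefully, and the place where one could in principle get stray terms, is that the elementary differentials produced by the implicit stage are exactly the two appearing in \eqref{fourier_cd} and no others. This follows from the precise structural form $A_k=c_k\mQ(f^n)$ and $B_k=d_k\mQ'(f^n)\mQ(f^n)$, which is itself preserved by the induction. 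Thus no new order-three differentials appear, and the four recursions in \eqref{lem_cd_1} capture all of the information needed to propagate the expansion to stage $k$.
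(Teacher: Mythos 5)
Your proof is correct and follows essentially the same route as the paper: induction on $k$, Taylor expansion of the Shu--Osher form \eqref{fsch1}, bootstrapping the implicit stage, and matching powers of $\Delta t$ to read off the recursions. The only difference is one of completeness — the paper writes out only the second-order matching for $c_k,d_k$ and declares the third-order relations ``similar,'' whereas you actually carry the expansion to $O(\Delta t^3)$ and verify that the implicit term contributes exactly $a_{kk}d_k\,\mQ'(f^n)\mQ'(f^n)\mQ(f^n)+\tfrac12 a_{kk}c_k^2\,\mQ''(f^n)(\mQ(f^n),\mQ(f^n))$ with no stray elementary differentials, which is the part the paper leaves to the reader.
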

\begin{proof} For simplicity we only provide the proof of the iterative relations  (\ref{lem_cd_1_1}) for $c_k$ and $d_k$ by second order Taylor expansion. The other two relations can be proved similarly by expanding to third order. The fact that $c_k=\sum\limits_{j=1}^k a_{kj}$ being the coefficient of the $O(\Delta t)$ term for $f^{(k)}$ follows from the Taylor expansion of (\ref{fsch}). To prove (\ref{lem_cd_1_1}), notice that the Taylor expansion of (\ref{fsch1}) up to second order gives
\begin{equation}\begin{split}
& f^n + c_k \Delta t \mQ(f^n) + d_k \Delta t^2 \mQ'(f^n)\mQ(f^n) \\ 
= & (1-\sum_{j=1}^{k-1}b_{kj})f^n  + \sum_{j=1}^{k-1}b_{kj}(f^n+c_j \Delta t \mQ(f^n) + d_j \Delta t^2 \mQ'(f^n)\mQ(f^n)) \\ 
& + \Delta t a_{kk} (\mQ(f^n) + c_k\Delta t \mQ'(f^n)\mQ(f^n)) + O(\Delta t^3),
\end{split}\end{equation}
where we used 
\begin{equation}
\mQ(f^{(k)}) = \mQ(f^n + c_k \Delta t \mQ(f^n) + O(\Delta t^2)) = \mQ(f^n) + c_k\Delta t \mQ'(f^n)\mQ(f^n) + O(\Delta t^2).
\end{equation}
Comparing the $O(\Delta t)$ and $O(\Delta t^2)$ terms gives (\ref{lem_cd_1_1}) respectively.
\end{proof}

It is straightforward to show that the exact solution to (\ref{ode_Q}) satisfies the Taylor expansion
\begin{equation}\label{Qex}\begin{split}
& f^{n+1} =  f^n + \Delta t \mQ(f^n) + \frac{1}{2} \Delta t^2 \mQ'(f^n)\mQ(f^n) \\
& + \Delta t^3\left( \frac{1}{6} \mQ''(f^n)(\mQ(f^n),\mQ(f^n)) + \frac{1}{6}\mQ'(f^n)\mQ'(f^n)\mQ(f^n) \right) + O(\Delta t^4).
\end{split}\end{equation}
Therefore, comparing \eqref{fourier_cd} and \eqref{Qex}, we have the order conditions for (\ref{fsch}):
\begin{equation}
 \text{First order: }c_s = 1, \quad \text{Second order: } d_s = \frac{1}{2}, \quad \text{Third order: } g_s=h_s=\frac{1}{6}.
\end{equation}


\subsubsection{The limiting scheme (the U scheme)}
\label{subsec:U_scheme}

In the limiting fluid regime, as $\ep \ra 0$, $f$ will be relaxed to the equilibrium state $M_U[f]$ for the BGK model. Thus, taking the moments of \eqref{eq: bgk} in the fluid regime, we have the limiting fluid equation for $U$:
\begin{equation}\label{eq_U}
\partial_t U =\mT(U),\quad \mT(U) := -\partial_x \langle v M_U[U](x, v) \phi(v) \rangle .
\end{equation}
Therefore, the Taylor expansion of its exact solution is given by
\begin{equation}
\begin{split}
& U^{n+1} = U^n + \Delta t  \mT(U^{n}) +  \Delta t^2 \frac{1}{2} \mT'(U^{n})\mT(U^{n}) \\
	& +  \Delta t^3 \left(\frac{1}{6} \mT''(U^{n})\big(\mT(U^{n}) , \mT(U^{n}) \big) +\frac{1}{6} \mT'(U^{n}) \mT'(U^{n}) \mT(U^{n}) \right) + O(\Delta t^4),
\end{split}
\end{equation}
which is similar to~\eqref{Qex}.

Taking moments of the f scheme in Shu-Osher form~\eqref{fsch1} against the collision invariants $\phi(v)$ gives
\begin{equation}
U^{(k)}(x) = (1-\sum_{j=1}^{k-1}b_{kj})\langle f^n(x-c_k v\Delta t, v) \phi(v) \rangle +  \sum_{j=1}^{k-1}b_{kj}\langle f^{(j)}(x-(c_k-c_j)v\Delta t, v) \phi(v) \rangle,
\end{equation}
where $U^{(k)}(x) := \langle f^{(k)} \phi(v)\rangle$ are the moments of $f^{(k)}$. The collision operator in~\eqref{fsch1} vanishes due to the property that the moments of the relaxation term are identically zero. Meanwhile, as $\ep \ra 0$, $f^{(k)}$ and $f^{n+1}$
 will be relaxed to the local equilibriums $M_U^{(k)}$ and $M_U^{n+1}$ respectively. Thus the limiting scheme for $U$ in~\eqref{eq_U} is given by the following {\bf U scheme}, for $k = 1, \cdots s$, 
\beq\label{Usch0}
U^{(k)}(x) = (1-\sum_{j=1}^{k-1}b_{kj})\langle M_U[U^n](x-c_k v\Delta t, v) \phi(v)\rangle +  \sum_{j=1}^{k-1}b_{kj}\langle M_U[U^{(j)}](x-(c_k-c_j)v\Delta t, v) \phi(v)\rangle. 
\eeq
Now we analyze the accuracy of the limiting U scheme~\eqref{Usch0}. By Taylor expansion,
\begin{equation}
\begin{split}
\langle M_U[U](x-v\Delta t, v) \phi(v) \rangle = & \langle M_U[U](x, v) \phi(v)\rangle - \Delta t \langle v \partial_x(M_U[U](x, v)) \phi(v)\rangle \\
& + \frac{1}{2}\Delta t^2 \langle v^2 \partial_{xx}(M_U[U](x, v)) \phi(v) \rangle \\
& -  \frac{1}{6}\Delta t^3 \langle v^3 \partial_{xxx}(M_U[U](x, v)) \phi(v) \rangle + O(\Delta t^4) \\
= & U + \Delta t \mT(U) + \Delta t^2 \mB(U) + \Delta t^3 \tilde{\mB}(U) + O(\Delta t^4),
\end{split}
\end{equation}
where 
\begin{equation}
\begin{split}
& \mB(U) := \frac{1}{2}\langle v^2 \partial_{xx}(M_U[U](x, v)) \phi(v)\rangle, \\
& \tilde{\mB}(U) := -  \frac{1}{6}\langle v^3 \partial_{xxx}(M_U[U](x, v)) \phi(v)\rangle, \\
\end{split}
\end{equation}
are not the same as $\mT'\mT,\,\mT''(\mT,\mT)$ or $\mT'\mT'\mT$.
\begin{rem}
In fact, one can write
\begin{equation}
\begin{split}
& \mT'(U)\mT(U) = \partial_x \langle v \nabla_U M_U[U](x, v) \phi(v) \cdot \mT(U) \rangle, \\
& \mB(U) = \partial_x\langle v \nabla_U M_U[U](x, v) \phi(v)\cdot \partial_x (v U) \rangle,
\end{split}
\end{equation}
where $\nabla_U M_U$ means the gradient of the map $U\mapsto M_U[U](v)$ at each $x$.  $\mT'(U)\mT(U)$ and $\mB(U)$ are clearly not the same, since $vU = v\langle M_U[U](v) \phi(v) \rangle \ne \langle v M_U[U](v) \phi(v)\rangle$ in general. One can similarly check that $\tilde{\mB}(U)$ is not a linear combination of $\mT''(U)(\mT(U),\mT(U))$ and $\mT(U)'\mT(U)'\mT(U)$.

For the simple model \eqref{twovelo}, one has $2\mB(U) = \partial_{xx}U$ and $\mT'(U)\mT(U) = b^2\partial_{xx}U$. Although they only differ by a constant multiple $b^2$, one cannot treat them as similar terms if one wants an accurate scheme with coefficients independent of the parameter $b$ in the model. For the third order terms, the situation is similar: one has $6\tilde{\mB}(U) = -b\partial_{xxx}U$, $ \mT'(U)\mT'(U)\mT(U) = -b^3\partial_{xxx}U$, and $ \mT''(U)(\mT(U),\mT(U)) = 0$.
\end{rem}

Therefore the limiting scheme~\eqref{Usch0} has the following expression, up to third order accuracy:
\begin{equation}\label{Usch}
\begin{split}
& U^{(k)} =  (1-\sum_{j=1}^{k-1} b_{kj})\Big(U^n + \Delta t c_k \mT(U^{n}) + \Delta t^2 c_k^2 \mB(U^n) + \Delta t^3 c_k^3 \tilde{\mB}(U^n)\Big) \\
& + \sum_{j=1}^{k-1}b_{kj}\Big(U^{(j)} + \Delta t (c_k-c_j)\mT(U^{(j)}) + \Delta t^2 (c_k-c_j)^2 \mB(U^{(j)}) + \Delta t^3 (c_k-c_j)^3 \tilde{\mB}(U^{(j)})\Big) + O(\Delta t^4).
\end{split}
\end{equation}
This can be viewed as an explicit RK scheme for the limiting equation (written in Shu-Osher form), with some error terms on second and third order. By induction one can show the following Taylor expansion:
\begin{lemma}
The $U^{(k)}$ given by (\ref{Usch}) satisfies
\begin{equation}
\begin{split}
U^{(k)} = & U^n + C_k \Delta t \mT(U^n) + D_k \Delta t^2 \mT'(U^n)\mT(U^n)  + B_k \Delta t^2 \mB(U^n) \\
 & + \Delta t^3\Big( G_k \mT''(U^n)(\mT(U^n),\mT(U^n)) + H_k\mT'(U^n)\mT'(U^n)\mT(U^n) + B^{*}_k\mT'(U^n)\mB(U^n) \\
 & +  B^{**}_k\mB'(U^n)\mT(U^n) + B^{***}_k\tilde{\mB}(U^n) \Big) + O(\Delta t^4) ,
\end{split}
\end{equation}
where the coefficients $D_k,B_k,G_k,H_k,B_k^{*},B_k^{**},B_k^{***}$ satisfy the iterative relations
\begin{subequations}\label{iters}
\begin{equation}\label{iters_1}
 C_k = c_k, \quad
 D_k = \sum_{j=1}^{k-1}b_{kj}(D_j + (c_k-c_j)c_j),
 \end{equation}
 \begin{equation}\label{iters_3}
 B_k = (1-\sum_{j=1}^{k-1} b_{kj})c_k^2 + \sum_{j=1}^{k-1}b_{kj}(B_j + (c_k-c_j)^2),
 \end{equation}
 \begin{equation}\label{iters_4}
 G_k = \sum_{j=1}^{k-1}b_{kj}(G_j + \frac{1}{2}(c_k-c_j)c_j^2), \quad
 H_k = \sum_{j=1}^{k-1}b_{kj}(H_j + (c_k-c_j)D_j) ,
 \end{equation}
 \begin{equation}\label{iters_6}
 B^{*}_k = \sum_{j=1}^{k-1}b_{kj}(B^{*}_j + (c_k-c_j)B_j), \quad
 B^{**}_k = \sum_{j=1}^{k-1}b_{kj}(B^{**}_j + (c_k-c_j)^2c_j) ,
 \end{equation}
 \begin{equation}\label{iters_8}
 B^{***}_k = (1-\sum_{j=1}^{k-1} b_{kj})c_k^3 + \sum_{j=1}^{k-1}b_{kj}(B^{***}_j + (c_k-c_j)^3) .
\end{equation}
\end{subequations}
\end{lemma}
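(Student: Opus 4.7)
The plan is to mimic the induction argument in Lemma \ref{lem_cd}, but now applied to the explicit (limiting) Shu--Osher scheme \eqref{Usch} and with a larger collection of ``trees'' to track. Writing
\begin{equation*}
U^{(j)} = U^n + C_j\Delta t\,\mT + D_j\Delta t^2\,\mT'\mT + B_j\Delta t^2\,\mB + \Delta t^3\bigl(G_j\mT''(\mT,\mT)+H_j\mT'\mT'\mT+B_j^{*}\mT'\mB+B_j^{**}\mB'\mT+B_j^{***}\tilde{\mB}\bigr)+O(\Delta t^4),
\end{equation*}
(with all derivatives evaluated at $U^n$), I will use induction on $k$. The base case $k=1$ is immediate since all inner sums are empty: \eqref{Usch} gives $U^{(1)}=U^n+c_1\Delta t\,\mT(U^n)+c_1^2\Delta t^2\mB(U^n)+c_1^3\Delta t^3\tilde{\mB}(U^n)+O(\Delta t^4)$, which matches the recurrences with empty sums: $C_1=c_1$, $D_1=0$, $B_1=c_1^2$, $G_1=H_1=B_1^{*}=B_1^{**}=0$, $B_1^{***}=c_1^3$.

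For the inductive step, assume the expansion holds for $U^{(j)}$ with $j<k$. First I would expand $\mT(U^{(j)})$, $\mB(U^{(j)})$, $\tilde{\mB}(U^{(j)})$ around $U^n$ to the required orders:
\begin{equation*}
\mT(U^{(j)}) = \mT + \Delta t\,C_j\mT'\mT + \Delta t^2\Bigl[\tfrac{1}{2}C_j^2\mT''(\mT,\mT) + D_j\mT'\mT'\mT + B_j\mT'\mB\Bigr] + O(\Delta t^3),
\end{equation*}
\begin{equation*}
\mB(U^{(j)}) = \mB + \Delta t\,C_j\mB'\mT + O(\Delta t^2),\qquad \tilde{\mB}(U^{(j)}) = \tilde{\mB} + O(\Delta t).
\end{equation*}
Substituting these, together with the inductive expansion of $U^{(j)}$ itself, into \eqref{Usch} and grouping contributions tree by tree gives the coefficients. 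For instance, the $\Delta t^2 \mT'\mT$ coefficient collects the term $b_{kj}D_j$ (from the $U^{(j)}$ carried through) and the term $b_{kj}(c_k-c_j)C_j = b_{kj}(c_k-c_j)c_j$ (from the first-order expansion of $\mT(U^{(j)})$ inside $(c_k-c_j)\Delta t\,\mT(U^{(j)})$), giving exactly \eqref{iters_1}. The $\Delta t^2\mB$ coefficient picks up $(1-\sum b_{kj})c_k^2$ from the $M_U[U^n]$ term plus $b_{kj}(B_j+(c_k-c_j)^2)$ from the $U^{(j)}$ and from $(c_k-c_j)^2\Delta t^2\mB(U^{(j)})$, giving \eqref{iters_3}. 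The remaining five recurrences \eqref{iters_4}--\eqref{iters_8} follow by the same mechanism: the $\Delta t^3$ trees $G_k,H_k,B_k^{*},B_k^{**},B_k^{***}$ each gather one ``inherited'' contribution $b_{kj}$ times the same tree at stage $j$, plus one ``newly created'' contribution proportional to $(c_k-c_j)^p$ (with $p=1,1,1,2,3$ respectively) multiplied by the appropriate lower-order coefficient of $U^{(j)}$ (namely $\tfrac12 c_j^2$, $D_j$, $B_j$, $c_j$, $1$).

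The verification is essentially routine Taylor bookkeeping; the main obstacle is purely organizational: one must be certain that no tree has been forgotten and that the mixed terms $\mT'\mB$ and $\mB'\mT$ are kept distinct (they arise from two geometrically different places --- substituting $\mB$ inside $\mT$ versus substituting $\mT$ inside $\mB$). Because $\mB,\tilde{\mB}$ are not expressible as $\mT'\mT$ or $\mT''(\mT,\mT)$ (see the remark in the paper), these are genuinely new independent trees and must be tracked separately. Once every tree is accounted for, matching the coefficient of each one on both sides of \eqref{Usch} yields precisely the system \eqref{iters}, completing the induction.
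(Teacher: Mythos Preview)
Your proposal is correct and follows exactly the approach the paper intends: the paper in fact omits the proof entirely, stating only that it ``can be proved by using the Taylor expansion of (\ref{Usch}) up to third order'' and is ``similar to the proof of Lemma~\ref{lem_cd}.'' Your inductive argument with tree-by-tree bookkeeping is precisely that omitted computation, and the details you give (the Taylor expansions of $\mT(U^{(j)})$, $\mB(U^{(j)})$, $\tilde{\mB}(U^{(j)})$ and the resulting contributions to each coefficient) check out.
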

This lemma can be proved by using the Taylor expansion of (\ref{Usch}) up to third order, and we omit the proof since it is similar to the proof of Lemma \ref{lem_cd}.

The order conditions of the U scheme (\ref{Usch}) are
\begin{equation}\label{Uorder}
\begin{split}
& \text{First order: }C_s = 1, \quad \text{Second order: } D_s = \frac{1}{2},\quad B_s=0, \\
& \text{Third order: } G_s=H_s=\frac{1}{6},\quad B_s^{*}=B_s^{**}=B_s^{***}=0.
\end{split}
\end{equation}


\subsection{Analysis for first and second order}
\label{subsec:ana12}

We analyze the relation between the accuracy of the f scheme and the U scheme by finding relations between their Taylor coefficients. We first consider first and second order accuracy:
\begin{theorem}\label{thm2}
The first and second order accuracy of the f scheme imply the corresponding accuracy of the U scheme.
\end{theorem}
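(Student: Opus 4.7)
My plan is to reduce both second-order U-scheme conditions to two purely algebraic identities that relate the U-scheme coefficients $C_k, D_k, B_k$ from \eqref{iters} to the f-scheme coefficients $c_k, d_k$ from Lemma~\ref{lem_cd}. First order is essentially free: the iteration \eqref{iters_1} states $C_k = c_k$ outright, so the f-scheme condition $c_s = 1$ produces $C_s = 1$ without further work.

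For second order I would prove by induction on $k$ the two stagewise identities
\begin{equation*}
D_k + d_k = c_k^2, \qquad B_k + 2 D_k = c_k^2, \qquad k = 1, \dots, s.
\end{equation*}
The base case $k = 1$ is immediate, since $D_1 = 0$, $B_1 = c_1^2$, and $d_1 = a_{11} c_1 = c_1^2$. For the inductive step I would substitute the iterations \eqref{lem_cd_1_1}, \eqref{iters_1} and \eqref{iters_3} into each combination and simplify using the rearrangement $\sum_{j=1}^{k-1} b_{kj} c_j = c_k - a_{kk}$ of the $c_k$-recursion. In $D_k + d_k$ the $\sum b_{kj} c_j^2$ produced by the factor $(c_k - c_j) c_j$ in the $D_k$ iteration cancels against the inductive contribution $\sum b_{kj}(D_j + d_j) = \sum b_{kj} c_j^2$, collapsing the whole expression to $c_k(\sum b_{kj} c_j + a_{kk}) = c_k^2$. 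The identity for $B_k + 2 D_k$ is handled analogously after expanding $(c_k - c_j)^2$ in the $B_k$ iteration; the mixed cross term $-2 c_k \sum b_{kj} c_j$ combines with $\sum b_{kj} c_j^2$ to produce again a clean multiple of $c_k^2$.

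Once both identities hold at $k = s$, the conclusion is immediate: $c_s = 1$ and $d_s = 1/2$ give $D_s = c_s^2 - d_s = 1/2$ and then $B_s = c_s^2 - 2 D_s = 0$, which are precisely the second-order U-scheme conditions in \eqref{Uorder}. The real obstacle is not the induction itself but guessing the right combinations: the $D_k$ and $B_k$ iterations in \eqref{iters} look structurally different from the $d_k$ iteration in \eqref{lem_cd_1_1}, and neither $D_s$ nor $B_s$ is individually expressible as a clean closed-form function of $c_k, d_k$ at intermediate stages. Once $D_k + d_k$ and $B_k + 2 D_k$ are chosen as the target combinations, the algebra closes in one step.
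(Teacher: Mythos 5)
Your proposal is correct and follows essentially the same route as the paper: an induction on the stage index establishing linear relations among the Taylor coefficients, with the first-order case being immediate from $C_k = c_k$. Your target identities $D_k + d_k = c_k^2$ and $B_k + 2D_k = c_k^2$ are equivalent to the paper's pair $d_k + D_k = c_k^2$ and $B_k = d_k - D_k$, and the cancellations you describe are exactly the ones carried out in the paper's inductive step.
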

\begin{proof}
Since the first order condition for the f scheme and the U scheme are the same ($C_s=c_s$), the first order accuracy of the f scheme implies that of the U scheme.

To show the conclusion for second order accuracy, we will prove the following relations:
\begin{subequations}\label{thm2_1}
\begin{equation}\label{thm2_1_1}
d_k+D_k = c_k^2, \quad
B_k = d_k-D_k.
\end{equation}
\end{subequations}
From the second order accuracy of the f scheme, we have $c_s=1,\,d_s=1/2$. With (\ref{thm2_1}), we obtain $C_s=1,\,D_s=1/2$, and then $B_s=0$, which verifies the order conditions for the second order scheme in (\ref{Uorder}).

We prove (\ref{thm2_1}) by induction. For $k=1$, one has
\begin{equation}
d_1=a_{11}^2,\quad D_1 = 0,\quad B_1 = c_1^2 = a_{11}^2,
\end{equation}
and thus (\ref{thm2_1}) holds for $k=1$.
Suppose the conclusion holds for $j=1,\cdots k-1$. To show  (\ref{thm2_1_1}), we notice that  (by induction hypothesis)
\begin{equation}\label{Dk}
D_k = \sum_{j=1}^{k-1}b_{kj}(D_j + (c_k-c_j)c_j) = \sum_{j=1}^{k-1}b_{kj}(c_j^2-d_j + (c_k-c_j)c_j) = \sum_{j=1}^{k-1}b_{kj}(-d_j + c_kc_j) .
\end{equation}
Summing with the (\ref{lem_cd_1_1}) gives
\begin{equation}
d_k+D_k = \sum_{j=1}^{k-1}b_{kj}c_kc_j + a_{kk}c_k = c_k(\sum_{j=1}^{k-1}b_{kj}c_j + a_{kk}) = c_k^2.
\end{equation}
To show (\ref{thm2_1_1}),
\begin{equation}
\begin{split}
B_k = & (1-\sum_{j=1}^{k-1} b_{kj})c_k^2 + \sum_{j=1}^{k-1}b_{kj}(B_j + (c_k-c_j)^2) =  c_k^2 + \sum_{j=1}^{k-1}b_{kj}(d_j-D_j -2c_kc_j+c_j^2) \\
= & c_k^2 + \sum_{j=1}^{k-1}b_{kj}(2d_j -2c_kc_j) =  c_k^2 - 2\sum_{j=1}^{k-1}b_{kj}(-d_j +c_kc_j) =  c_k^2 - 2D_k =  d_k-D_k,
\end{split}
\end{equation}
where we started from (\ref{iters_3}), and then used the induction hypothesis, and finally used (\ref{Dk}) and (\ref{thm2_1_1}).
\end{proof}


\subsection{Analysis for third order}
\label{subsec:ana3}
\begin{theorem}\label{thm3}
The third order accuracy of the f scheme, together with the condition $G_s=1/6$, implies the third order accuracy of the U scheme.
\end{theorem}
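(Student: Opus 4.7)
The plan is to imitate the strategy of Theorem~\ref{thm2}: find stage-by-stage algebraic identities that tie the third-order U-scheme Taylor coefficients $\{H_k, B_k^{*}, B_k^{**}, B_k^{***}\}$ to the f-scheme coefficients $\{c_k, d_k, g_k, h_k\}$, to the already-controlled second-order U-scheme quantities $\{D_k, B_k\}$, and to $G_k$ itself. At the final stage $k=s$, substituting the f-scheme order conditions $c_s=1,\,d_s=1/2,\,g_s=h_s=1/6$, the Theorem~\ref{thm2} consequences $D_s=1/2,\,B_s=0$, and the extra hypothesis $G_s=1/6$ into these identities should force $H_s=1/6$ and $B_s^{*}=B_s^{**}=B_s^{***}=0$, which together with $G_s=1/6$ are precisely the remaining third-order entries of \eqref{Uorder}.

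Concretely, I would proceed in two steps. First, I would conjecture each identity by writing an ansatz for the relevant U-scheme coefficient as a linear combination of $\{c_k^3,\ c_k d_k,\ c_k D_k,\ c_k B_k,\ g_k,\ h_k,\ G_k\}$ with scalar unknowns, pinning those scalars down by matching the base case $k=1$ (where the sums $\sum_{j=1}^{k-1}b_{kj}(\cdot)$ collapse to zero) and, if necessary, a second base case. Second, I would verify each ansatz by induction on $k$, invoking the recursions \eqref{lem_cd_1_1}, \eqref{lem_cd_1_3}, and \eqref{iters_4}--\eqref{iters_8} together with the Theorem~\ref{thm2} relations $D_j=c_j^2-d_j$ and $B_j=d_j-D_j$ to collapse the mixed summands $(c_k-c_j)D_j$, $(c_k-c_j)B_j$, $(c_k-c_j)^2 c_j$, $(c_k-c_j)^3$ and $(c_k-c_j)c_j^2$ into the span of the ansatz. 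The sum-reindexing and exchange-of-order tricks that appear in the proof of Lemma~\ref{lem_cd} and in equation (\ref{Dk})-style manipulations inside Theorem~\ref{thm2} would be the main algebraic devices.

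The principal obstacle is conceptual rather than computational: I must explain why $G_s=1/6$ is genuinely an independent condition, so that the hypothesis of the theorem is not vacuous. Structurally, the source term $\tfrac12(c_k-c_j)c_j^2$ in the $G_k$ recursion is purely off-diagonal, whereas the source $\tfrac12 a_{kk}c_k^2$ in the $g_k$ recursion is purely diagonal; consequently, no linear combination of $\{g_k, h_k, c_k, d_k\}$ can reproduce $G_k$ across general tableaux. A direct evaluation on the classical three-stage DIRK3 of Table~\ref{tab_dirk3}, for which $c_s,\,d_s,\,g_s,\,h_s$ meet the f-scheme order conditions while $G_s\ne 1/6$, will confirm this and simultaneously pinpoint the source of the order reduction reported in \cite{santagati2011new}. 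A secondary obstacle is bookkeeping: there are five U-scheme third-order coefficients against only two on the f side, so the ansatz must be chosen broadly enough---crucially incorporating $G_k$ among the knowns---for the inductions to close cleanly. Once the correct ansatz is in hand, each verification is routine algebra of exactly the same flavor as the proof of Theorem~\ref{thm2}.
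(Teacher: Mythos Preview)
Your proposal is correct and follows essentially the same route as the paper: the paper proves four inductive stage-level identities---$2G_k-H_k+2g_k=c_kd_k$, $B_k^{*}=2G_k-2H_k$, $B_k^{**}=2g_k-2H_k-c_k^3+2c_kD_k$, and $B_k^{***}=c_k^3-3B_k^{**}-6G_k$---and then substitutes the f-scheme conditions together with $G_s=1/6$ and $D_s=1/2$ at $k=s$, exactly as you describe. Your ansatz space $\{c_k^3,\,c_kd_k,\,c_kD_k,\,c_kB_k,\,g_k,\,h_k,\,G_k\}$ contains all of these expressions (indeed $h_k$ and $c_kB_k$ turn out to be unnecessary), and your observation that $G_s=1/6$ is genuinely independent is precisely the content of the paper's Remark~\ref{rem: dirk3}.
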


\begin{proof}

We will prove the following relations
\begin{subequations}\label{thm3_1}
\begin{equation}\label{thm3_1_1}
2G_k-H_k+2g_k = c_kd_k ,
\end{equation}
\begin{equation}\label{thm3_1_2}
B_k^* = 2G_k-2H_k, \quad
B_k^{**} = 2g_k-2H_k-c_k^3+2c_kD_k ,
\end{equation}
\begin{equation}\label{thm3_1_4}
B_k^{***} = c_k^3 - 3B_k^{**} - 6G_k.
\end{equation}
\end{subequations}

(\ref{thm3_1}) implies the conclusion of the theorem. In fact, the third order accuracy of the f scheme gives $c_s=1,\,d_s=1/2,\,g_s=h_s=1/6$. If $G_s=1/6$, then (\ref{thm3_1_1}) gives $H_s=1/6$, and similarly the other three equations of (\ref{thm3_1}) give $B_s^{*} =B_s^{**} =B_s^{***} =0$ (where we need to use $D_s=1/2$ by Theorem \ref{thm2}). 

To prove (\ref{thm3_1_1}), we define $L_k=2G_k-H_k$. Then from (\ref{iters_4}) and (\ref{thm2_1_1}),
\begin{equation}
\begin{split}
& L_k = \sum_{j=1}^{k-1}b_{kj}(L_j + (c_k-c_j)d_j). \\
\end{split}
\end{equation}
We prove by induction that
\begin{equation}
L_k = c_kd_k - 2g_k,
\end{equation}
which is exactly (\ref{thm3_1_1}). For $k=1$, $L_1=0$, $c_1d_1-2g_1 = a_{11}\cdot a_{11}^2 - 2\cdot (a_{11}^3/2) = 0$. Suppose the conclusion holds for $j=1,\cdots k-1$, then
\begin{equation}
\begin{split}
L_k = & \sum_{j=1}^{k-1}b_{kj}(L_j + (c_k-c_j)d_j) 
=  \sum_{j=1}^{k-1}b_{kj}(c_jd_j-2g_j + (c_k-c_j)d_j) \\
= & c_k\sum_{j=1}^{k-1}b_{kj}d_j-2\sum_{j=1}^{k-1}b_{kj}g_j  
=  c_k(d_k-a_{kk}c_k)-2\sum_{j=1}^{k-1}b_{kj}g_j  \\
= & c_kd_k-2\left(\sum_{j=1}^{k-1}b_{kj}g_j + \frac{1}{2}a_{kk}c_k^2\right)  
=  c_kd_k - 2g_k, 
\end{split}
\end{equation}
where we used the induction hypothesis, (\ref{lem_cd_1_1}) and (\ref{lem_cd_1_3}).
(\ref{thm3_1_2}) is clear from the (\ref{iters_4}) and (\ref{thm2_1}). Now we show the second equality in (\ref{thm3_1_2}) by induction. For $k=1$, $B_1^{**}=0,\,2g_1-2H_1-c_1^3-2c_1D_1 = 2\cdot\frac{1}{2}a_{11}c_1^2 - c_1^3 = 0$. Suppose the conclusion holds for $j=1,\dots,k-1$, then
\begin{equation}
\begin{split}
& B_k^{**} =  \sum_{j=1}^{k-1}b_{kj}(B_j^{**} + (c_k-c_j)^2c_j) \\
= & \sum_{j=1}^{k-1}b_{kj}(2g_j-2H_j+2c_jD_j+c_k^2c_j-2c_kc_j^2) \\
= & 2\sum_{j=1}^{k-1}b_{kj}g_j  -2\sum_{j=1}^{k-1}b_{kj}H_j + \sum_{j=1}^{k-1}b_{kj}(2c_jD_j+c_k^2c_j-2c_kc_j^2) \\
= & 2(g_k-\frac{1}{2}a_{kk}c_k^2)  -2(H_k-\sum_{j=1}^{k-1}b_{kj}(c_k-c_j)D_j) + \sum_{j=1}^{k-1}b_{kj}(2c_jD_j+c_k^2c_j-2c_kc_j^2) \\
= & 2g_k-a_{kk}c_k^2  -2H_k + \sum_{j=1}^{k-1}b_{kj}(2(c_k-c_j)D_j+2c_jD_j+c_k^2c_j-2c_kc_j^2) \\
= & 2g_k-a_{kk}c_k^2  -2H_k + c_k\sum_{j=1}^{k-1}b_{kj}(c_kc_j-2d_j) \\
= & 2g_k -2H_k -c_k(\sum_{j=1}^{k-1}b_{kj}d_j + a_{kk}c_k) + c_k\sum_{j=1}^{k-1}b_{kj}(c_kc_j-d_j) \\
= & 2g_k -2H_k -c_kd_k + c_kD_k \\
= & 2g_k -2H_k -c_k^3 + 2c_kD_k, \\
\end{split}
\end{equation}
where we used the induction hypothesis, (\ref{lem_cd_1_3}), (\ref{iters_4}), (\ref{lem_cd_1_1}) and (\ref{Dk}).

To prove (\ref{thm3_1_4}), notice that
\begin{equation}
B_k^{***} - c_k^3 = \sum_{j=1}^{k-1}b_{kj} ((B_j^{***}-c_j^3) -3c_k^2c_j+3c_kc_j^2)= \sum_{j=1}^{k-1}b_{kj} ((B_j^{***}-c_j^3) -3(c_k-c_j)^2c_j - 6\cdot \frac{1}{2}(c_k-c_j)c_j^2).
\end{equation}
Therefore (\ref{thm3_1_4}) follows from (\ref{iters_6}) and (\ref{iters_4}).
\end{proof}

This  forms sharp contrast with that of the mesh-based Eulerian IMEX schemes for stiff kinetic equations. It was shown in Theorem 3.3 in \cite{dimarco2013asymptotic} that an IMEX of type A (i.e., the implicit table is DIRK) applied to a stiff kinetic equation gives rise to an explicit RK scheme in the fluid regime, whose Butcher table is the same as the explicit table of the IMEX scheme. As a consequence, the $k$-th order accuracy of such an IMEX scheme in the kinetic regime implies that in the fluid regime, for any $k\ge 1$.

Based on the order conditions in Theorem~\ref{thm3}, we propose 4-stage DIRK3 methods with Table~\ref{tab:rw_1}-Table~\ref{tab:rw_9} in Appendix B for both regimes up to third order.

\begin{rem}
\label{rem: dirk3}
The classical 3-stage DIRK3 method given by Table~\ref{tab_dirk3} \cite{calvo2001linearly} in the Appendix B
does not satisfies the condition $G_3=1/6$ appeared in Theorem \ref{thm3}. 
In fact, calculation shows that $G_3\approx 0.066745$, and therefore the corresponding U scheme is not third order. 
This example shows that the condition $G_s=1/6$ is not a consequence of the third order accuracy of the f scheme.

\end{rem}

\begin{rem}
There are a lot of freedoms when creating $4$-stage DIRK3 methods satisfying Theorem~\ref{thm3}. For the consideration of computational cost, one could let certain elements of the Butcher tableaus be zeros. For instance, in Table~\ref{tab:rw_6} - Table~\ref{tab:rw_9}, we have two zero elements of $a_{ij}$. 
\end{rem}

\section{Stability analysis for DIRK methods}
\label{sec:stanalysis} 

In this section, we analyze the stability property of DIRK schemes under the SL setting. In particular, we are interested in the L-stable DIRK methods for the AP property in the limiting $\ep \ra 0$ regime. Linear stability of DIRK schemes with different $\ep$ are investigated. 
Similar to the accuracy analysis in Section~\ref{sec:new_dirk}, we keep the phase space continuous and consider the linear kinetic problem~\eqref{twovelo} with periodic boundary condition, written in the following form 
\begin{equation} \label{eq:sys_kinetic}
\begin{pmatrix}	f_1	\\	f_2	\end{pmatrix}_t
+
\begin{pmatrix}	1	&	0	\\	0	&	-1	\end{pmatrix}
\begin{pmatrix}	f_1	\\	f_2	\end{pmatrix}_x
=
\f{1}{\ep}
\begin{pmatrix}	M_1 - f_1	\\	M_2 - f_2	\end{pmatrix}
\end{equation}
where $M_U = (M_1, M_2)^T$ satisfies~\eqref{Mu_blinear}.                     
In the analysis, we apply DIRK methods in the SL framework to \eqref{eq:sys_kinetic} and study $f_1$ and $f_2$ in the form of Fourier series
\begin{equation} \label{eq:fourier_mode}
f_1= \sum_k \widehat{f}_1^{(k)}(t) e^{ikx}	\quad \text{and} \quad f_2 = \sum_k \widehat{f}_2^{(k)}(t)e^{ikx}, 	\quad i = \sqrt{-1}
\end{equation}
with $k$ is the wavenumber. 

Before discussing the stability of general DIRK methods, we first work with the backward Euler method to illustrate the process. Applying the backward Euler to \eqref{eq:sys_kinetic} along characteristics, gives us
\begin{align} 
f_1^{n+1} &= f_1^n(x - \Dt) + \f{\Dt}{\ep} (M_1^{n+1} - f_1^{n+1}),	\nonumber \\
f_2^{n+1} &= f_2^n(x + \Dt) + \f{\Dt}{\ep} (M_2^{n+1} - f_2^{n+1}).	\label{eq:fourier_be1}
\end{align}
Plugging \eqref{eq:fourier_mode} into \eqref{eq:fourier_be1}, one can obtain the following the relation between the $k$-th Fourier modes $e^{ikx}$ at $t^{n+1}$ and $t^n$:
\begin{equation} \label{eq:fourier_be3}
\begin{pmatrix}		\widehat{f}_1^{(k)}(t^{n+1})	\\	\widehat{f}_2^{(k)}(t^{n+1})	\end{pmatrix}
=
\left[
I-
\f{\Dt}{2 \ep}
\begin{pmatrix}		-1+b	&	1+b	\\	1-b	&	-1-b	\end{pmatrix}
\right]^{-1}
\begin{pmatrix}		e^{- ik \Dt}	&	0	\\	0	&	e^{ik \Dt}	\end{pmatrix}
\begin{pmatrix}		\widehat{f}_1^{(k)}(t^n)		\\	\widehat{f}_2^{(k)}(t^n)	\end{pmatrix}
\end{equation}
with the amplification matrix
\begin{align}
M_{BE} =
&\left[
I-
\f{\Dt}{2 \ep}
\begin{pmatrix}		-1+b	&	1+b	\\	1-b	&	-1-b	\end{pmatrix}
\right]^{-1}
\begin{pmatrix}		e^{- ik\Dt}	&	0	\\	0	&	e^{ik\Dt}	\end{pmatrix}	\nonumber\\
=&\f{1}{1+\xi}
\begin{pmatrix}		
1+\f{1+b}{2}\xi	&	\f{1+b}{2}\xi	\\	\f{1-b}{2}\xi	&	1+\f{1-b}{2}\xi	
\end{pmatrix}
\begin{pmatrix}		
e^{- ik\Dt}	&	0	\\	0	&	e^{ik\Dt}	
\end{pmatrix}		\label{matrix:be}
\end{align}
where $\xi = \f{\Dt}{\ep}$.
Below we will use $\f{\Dt}{\ep}$ and $\xi$ interchangeably. Taking the $2$-norm of $M_{BE}$, we have
\begin{align}
\left\lVert M_{BE} \right\rVert_2
&\leq
\left\lVert
\f{1}{1+\xi}
\begin{pmatrix}		
1+\f{1+b}{2}\xi	&	\f{1+b}{2}\xi	\\	\f{1-b}{2}\xi	&	1+\f{1-b}{2}\xi	
\end{pmatrix}
\right\rVert_2
\left\lVert
\begin{pmatrix}		
e^{- ik\Dt}	&	0	\\	0	&	e^{ik\Dt}	
\end{pmatrix}
\right\rVert_2
\nonumber	= 1.
\end{align}
In fact, the backward Euler method is unconditionally stable in this context.  


Next, we generalize the above analysis to a $2$-stage DIRK2 method, e.g. with Butcher tableau \eqref{tab_dirk2}.  
At the first stage $t^{(1)} = t^n + c_1 \Dt$, repeating similar procedure as from \eqref{eq:fourier_be1} and \eqref{eq:fourier_be3}, one can show the Fourier modes at $t^{(1)}$ and $t^n$ are related via
\begin{align}
\begin{pmatrix}		\widehat{f}_1^{(k)}(t^{(1)})	\\	\widehat{f}_2^{(k)}(t^{(1)})	\end{pmatrix}
&=
\f{1}{1+a_{11}\xi}
\begin{pmatrix}		
1+\f{1+b}{2}a_{11}\xi	&	\f{1+b}{2}a_{11}\xi	\\	\f{1-b}{2}a_{11}\xi	&	1+\f{1-b}{2}a_{11}\xi	
\end{pmatrix}
\begin{pmatrix}		e^{- ik c_1 \Dt}	&	0	\\	0	&	e^{ik c_1 \Dt}	\end{pmatrix}
\begin{pmatrix}		\widehat{f}_1^{(k)}(t^n)		\\	\widehat{f}_2^{(k)}(t^n)	\end{pmatrix}	\nonumber \\
&\doteq M_{DIRK2, (1)}
\begin{pmatrix}		\widehat{f}_1^{(k)}(t^n)		\\	\widehat{f}_2^{(k)}(t^n)	\end{pmatrix}	\label{eq:fourier_dirk2_1}
\end{align}
with amplification matrix $M_{DIRK2, (1)}$. 
At the second (i.e. final) stage $t^{n+1}$, using \eqref{fsch1}, we have the following formulation in the Shu-Osher form
\begin{align}
f_1^{n+1} &= (1- b_{21}) f_1^n(x - \Dt) + b_{21} f_1^{(1)}(x - (1 - c_1) \Dt) + \f{a_{22}\Dt}{\ep} (M_1^{n+1} - f_1^{n+1}), \nonumber\\
f_2^{n+1} &= (1- b_{21}) f_2^n(x + \Dt) + b_{21} f_2^{(1)}(x + (1- c_1) \Dt) + \f{a_{22}\Dt}{\ep} (M_2^{n+1} - f_2^{n+1}). \label{eq:dirk2_stage2}
\end{align}
For the notation simplicity, for $l = 2,\cdots s,\ l > j \geq 1$, we introduce 
\begin{align}
&A_l =
I-
\f{a_{ll}\Dt}{\ep}
\begin{pmatrix}		\f{-1+b}{2}	&	\f{1+b}{2}	\\	\f{1-b}{2}	&	\f{-1-b}{2}	\end{pmatrix},	\quad
B_l = 
(1 - \sum\limits_{j=1}^{l-1}b_{l j})
\begin{pmatrix}	
e^{- ik c_l \Delta t}	&	0	\\	
0				&	 e^{ik c_l \Delta t} \end{pmatrix}			
\label{eq:fourier_dirk2_note}	\\
&C_{lj} = 
b_{l j}
\begin{pmatrix}	
e^{- ik (c_l-c_j)\Delta t}	&	0	\\	
0					&	 e^{ik (c_l- c_j)\Delta t}	
\end{pmatrix} \nonumber
\end{align}
for the operations involving intermediate stage values at $t^{(j)}, j = 1, \cdots l$. With notations in \eqref{eq:fourier_dirk2_note}, \eqref{eq:dirk2_stage2} leads to
\begin{equation} \label{eq:fourier_dirk2_2}
\begin{pmatrix}		\widehat{f}_1^{(k)}(t^{n+1})	\\	\widehat{f}_2^{(k)}(t^{n+1})	\end{pmatrix}
=
A_2
^{-1}
\left[
B_2
\begin{pmatrix}		\widehat{f}_1^{(k)}(t^n)		\\	\widehat{f}_2^{(k)}(t^n)	\end{pmatrix}
+
C_{21}
\begin{pmatrix}		\widehat{f}_1^{(k)}(t^{(1)})		\\	\widehat{f}_2^{(k)}(t^{(1)})	\end{pmatrix}
\right]
\end{equation}
Combining \eqref{eq:fourier_dirk2_1} and \eqref{eq:fourier_dirk2_2}, we have
\begin{align}
\begin{pmatrix}		\widehat{f}_1^{(k)}(t^{n+1})	\\	\widehat{f}_2^{(k)}(t^{n+1})	\end{pmatrix}
&=
A_2
^{-1}
\left[
B_2
+
C_{21}
M_{DIRK2, (1)}
\right]
\begin{pmatrix}		\widehat{f}_1^{(k)}(t^n)		\\	\widehat{f}_2^{(k)}(t^n)	\end{pmatrix}	\doteq M_{DIRK2}
\begin{pmatrix}		\widehat{f}_1^{(k)}(t^n)		\\	\widehat{f}_2^{(k)}(t^n)	\end{pmatrix}.	\label{eq:fourier_dirk2_3}
\end{align}
When $\xi \ra \infty$, the amplification matrix $M_{DIRK2}$ goes to
\begin{align}
&
\begin{pmatrix}
\f{1+b}{2}\left[(1+\f{-1+b}{2}b_{21})e^{-ik c_2\Dt} + \f{1-b}{2}b_{21}e^{ik(c_2-2c_1)\Dt}\right]
&
\f{1+b}{2}\left[\f{1+b}{2}b_{21}e^{-ik(c_2-2c_1)\Dt} + (1+\f{-1-b}{2}b_{21})e^{ik c_2\Dt}\right]\\
\f{1-b}{2}\left[(1+\f{-1+b}{2}b_{21})e^{-ik c_2\Dt} + \f{1-b}{2}b_{21}e^{ik(c_2-2c_1)\Dt}\right]
&
\f{1-b}{2}\left[\f{1+b}{2}b_{21}e^{-ik(c_2-2c_1)\Dt} + (1+\f{-1-b}{2}b_{21})e^{ik c_2\Dt}\right]
\end{pmatrix}	\label{eq:dirk2_shu_6}
\end{align}
and the corresponding eigenvalues are $\lambda_1 = 0$ and 
$$\lambda_2 = (1-\f{1-b^2}{2}b_{21})\cos(kc_2\Dt) + \f{1-b^2}{2}b_{21} \cos(k(c_2 - 2c_1)\Dt).$$ 
In Figure~\ref{fig:dirk2_limiting}, we present the contour plot of $\vert\lambda_2\vert$ versus $b \in [0, 1]$ and $k\Dt \in [0, 2\pi]$, as $\xi = \f{\Dt}{\ep} \ra \infty$. The contour plot of $\vert \lambda_2\vert$ in Figure~\ref{fig:dirk2_limiting} is plotted subject to the calculation error of Matlab. We observe that for different $b\in [0, 1]$, different constraints on $k\Dt$ need to be imposed for stability. For example, one needs to use $k\Dt \in [0, 1.7927\pi]$ so that $\vert \lambda_2\vert \leq 1$ when $b = 0$, while $k\Dt \in [0, 2\pi]$ is allowed for a bigger $b$, such as $b=0.6$ in Example~\ref{exa:linear}.
In Figure~\ref{fig:dirk2_limiting}, we show the contour plot of two eigenvalues for $M_{DIRK2}$ when $b = 0.6$. We consider the range of $k\Dt \in [0, 2\pi]$ and $\xi \in [0, 10]$. Notice that for $\xi$ greater than $10$ and in the limit of $\infty$, the spectral radius of $M_{DIRK2}$ has been numerically checked to be bounded by $1$ with $b = 0.6, k\Dt \in [0, 2\pi]$. We note that, the constraint for the range of $k\Dt$ gives a guidance to the time stepping size one can take, since $k$ is the wave number related to the spatial resolution ranging from $0$ to $\frac{\pi}{\Dx}$, thus the upper bound on $k\Dt$ provides a time stepping constraint on the $CFL  \propto \frac{\Dx}{\Dt}$. However, it does not provide a sufficient condition for linear stability, since the spatial operations are kept continuous in the current analysis. Analysis for a fully discrete scheme will be rather involved and out of the scope of current work, especially when feet of characteristics could be located more than one cell away for $CFL$ larger than $1$. 

\begin{figure}
\centering
\includegraphics[width=0.3\linewidth]{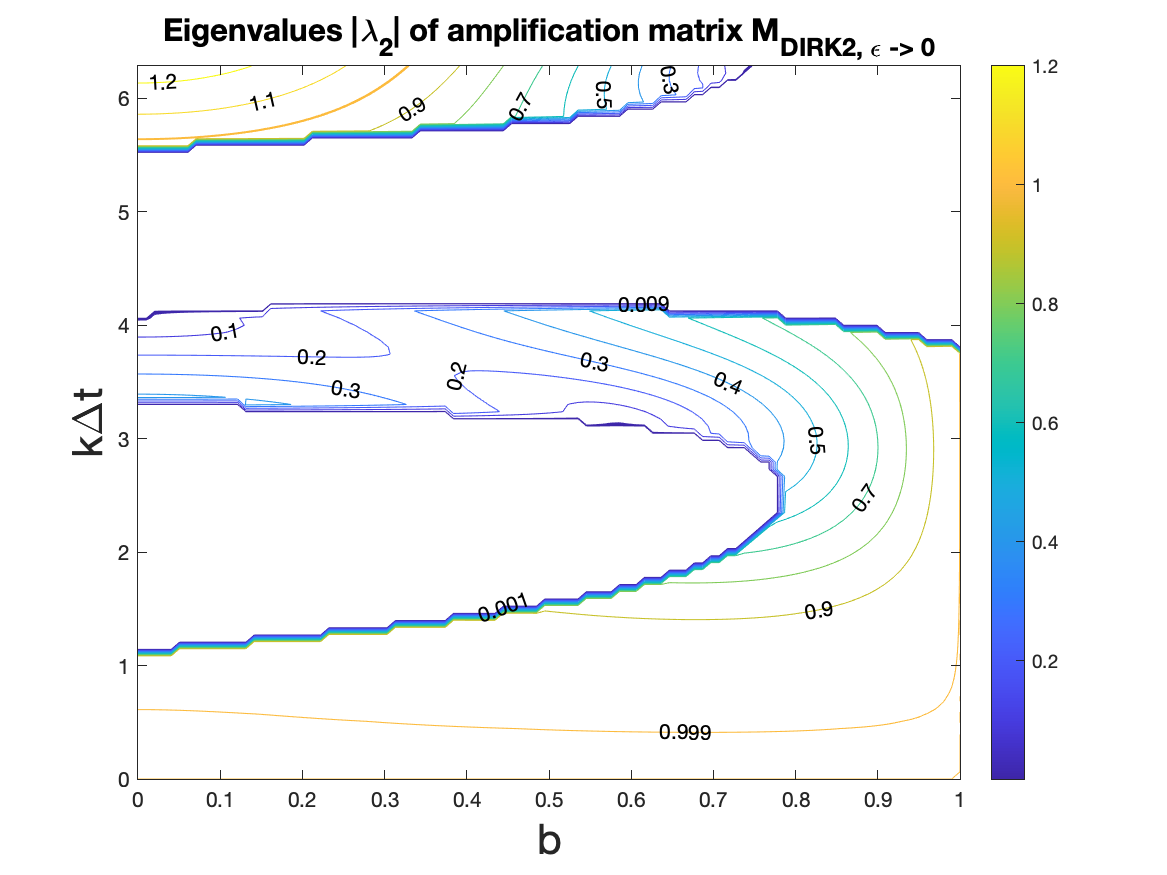}
\includegraphics[width=0.3\linewidth]{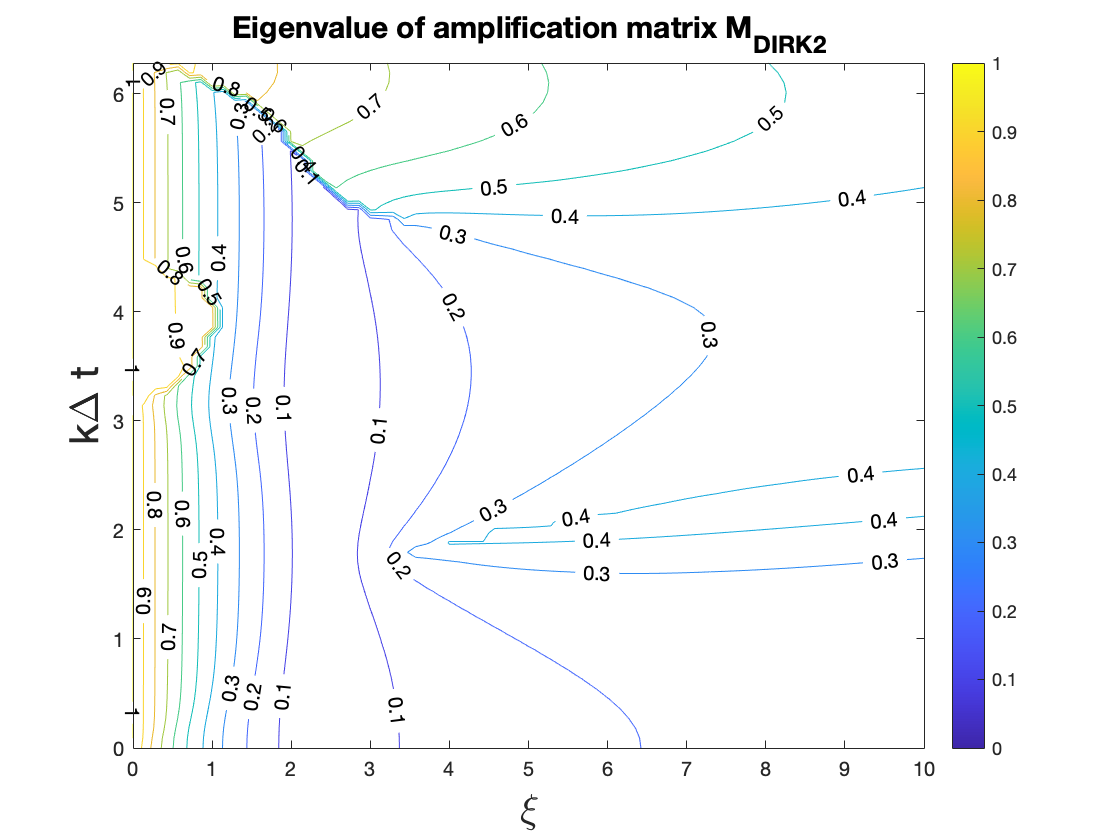}
\includegraphics[width=0.3\linewidth]{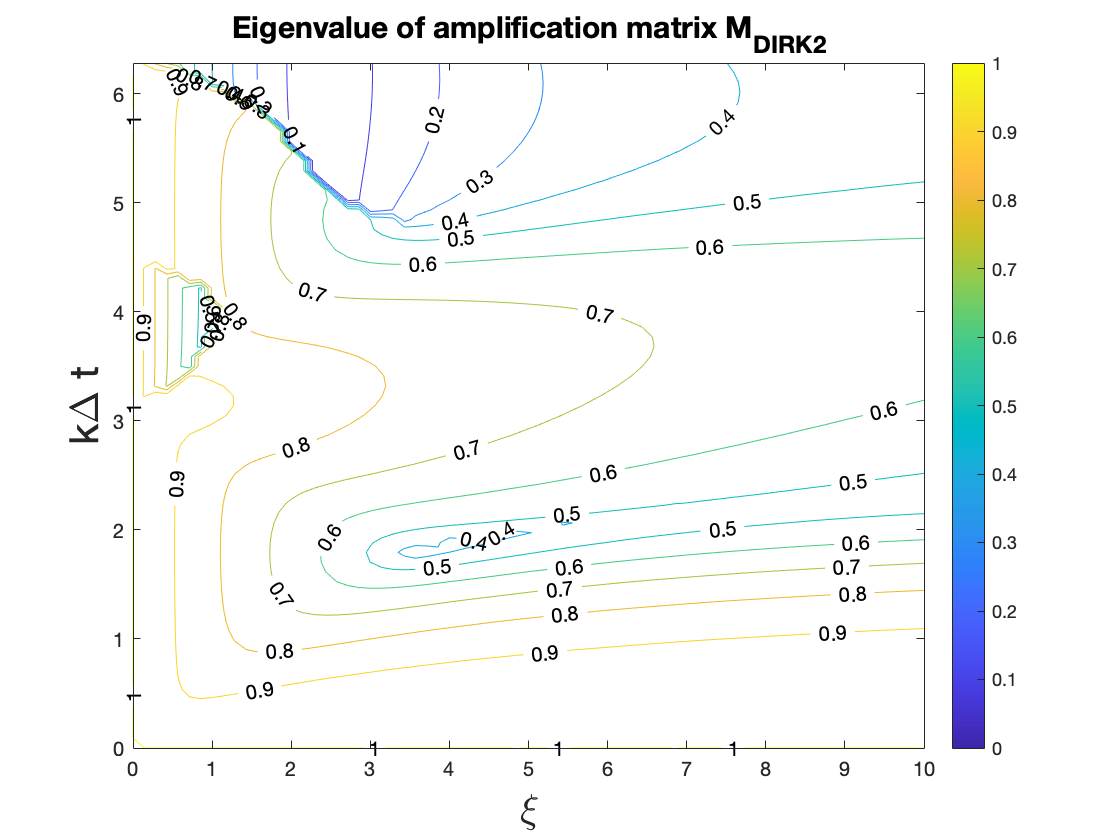}
\caption{DIRK2 method in Table~\ref{tab_dirk2}. Left: contour plot of spectral radius of $M_{DIRK2, \xi \ra \infty}$ versus $b \in [0, 1]$ and $k\Dt \in [0, 2\pi]$.
Middle and right: contour plot of the eigenvalues $|\lambda_{1,2}|$ with $k\Dt\in [0, 2\pi]$ and $\xi = \f{\Dt}{\ep}\in [0, 10]$ for the amplification matrix $M_{DIRK2}$ with $b = 0.6$ in \eqref{eq:sys_kinetic}. Note that we choose the range of $\xi \in [0, 10]$ as an example. It is numerically verified that  both eigenvalues are bounded from above by $1$ for $\xi>10$. 
}
\label{fig:dirk2_limiting}
\end{figure}

Finally, when we perform the linear stability analysis to the proposed {{$4$-stage DIRK3}} time discretization methods specified in Butcher Tables~\ref{tab:rw_1}-\ref{tab:rw_9} for \eqref{eq:sys_kinetic}, the Fourier modes of $e^{ikx}$ for $f_1$ and $f_2$ follow the following recursive relation, $l = 2,3,4$,
\begin{equation} 
\begin{pmatrix}		\widehat{f}_1^{(k)}(t^{(l)})	\\	\widehat{f}_2^{(k)}(t^{(l)})	\end{pmatrix}
=
A_l^{-1}
\left[
B_l
\begin{pmatrix}		\widehat{f}_1^{(k)}(t^n)		\\	\widehat{f}_2^{(k)}(t^n)	\end{pmatrix}
+
\sum_{j = 1}^{l-1}
C_{lj}
\begin{pmatrix}		\widehat{f}_1^{(k)}(t^{(j)})		\\	\widehat{f}_2^{(k)}(t^{(j)})	\end{pmatrix}
\right],
\end{equation}
and that 
\begin{align}
\begin{pmatrix}		\widehat{f}_1^{(k)}(t^{n+1})	\\	\widehat{f}_2^{(k)}(t^{n+1})	\end{pmatrix}
&=
A_4^{-1}
\left[
B_4
+
\sum_{j=1}^3
C_{4j}
A_j^{-1}
E_j
\right]
\begin{pmatrix}		\widehat{f}_1^{(k)}(t^n)		\\	\widehat{f}_2^{(k)}(t^n)	\end{pmatrix}	\nonumber\\
&=
M_{DIRK3}
\begin{pmatrix}		\widehat{f}_1^{(k)}(t^n)		\\	\widehat{f}_2^{(k)}(t^n)	\end{pmatrix}
\end{align}
where
$
E_j
=
B_j
+
\sum\limits_{l=1}^{j-1}
C_{jl}
A_l^{-1}
E_l
$.
We compute the spectral radius of $M_{DIRK3, \ep \ra 0}$ by Matlab, for which the contour plot is presented in Figure~\ref{fig:dirk3_4s_limiting} with $b \in [0, 1]$ and $k\Dt \in [0, 2\pi]$ for DIRK3 methods specified in Table~\ref{tab_dirk3} - \ref{tab:rw_9}. It can be observed that the appropriate ranges of $k\Dt$ such that $\vert \lambda_2\vert \leq 1$ depend on the choice of $b \in [0, 1]$. In particular, when $b = 0.6$, we see that in Figure~\ref{fig:dirk3_11_limiting}, with $k\Dt \in [0, 5]$ which is a considerable wide range for the $k\Dt$, we have $\vert\lambda_2\vert \leq 1$. Compared with other contour plots of $\lambda_2$ in Figure ~\ref{fig:dirk3_4s_limiting}, we found the 4-stage DIRK3 method with Table~\ref{tab:rw_9} being a robust choice for linear stability, as well as asymptotic third order accuracy. Following this observation, in Figure~\ref{fig:contour_dirk3_11_b610}, we plot the contour plot of $\vert\lambda_{1, 2}\vert$ for the corresponding amplification matrix $M_{DIRK3}$ when $b = 0.6$ and use $k\Dt \in [0, 1.5924\pi], \xi \in [0, 10]$. We observe the $\vert\lambda_{1, 2}\vert \leq 1$ with our choice of $\xi$ and $k\Dt$ which indicates that when $b = 0.6$ in \eqref{eq:sys_kinetic}, the spectral radius of DIRK3 method in Table~\ref{tab:rw_9} is bounded by $1$ for $k\Dt\in [0, 1.5924\pi]$. Such statement is checked to be valid for all $\xi$, although only $\xi\in [0, 10]$ is being plotted. For the numerical tests in the next section, we choose the DIRK3 method specified in Table~\ref{tab:rw_9} with third order classical accuracy as well as asymptotic accuracy. We also note that the spectral radius of $M_{DIRK3, \ep \ra 0}$ is slightly larger than $1$ toward the lower right corner of the plot in Figure~\ref{fig:dirk3_11_limiting}, yet no numerical instability is observed in our tests. It is our future work to optimize DIRK3 methods, that satisfy the extra order condition for asymptotic third order temporal convergence, in terms of their linear stability properties.

\begin{figure}[ht]
\subfigure[Table~\ref{tab_dirk3}]{
\centering
\includegraphics[width=0.3\linewidth]{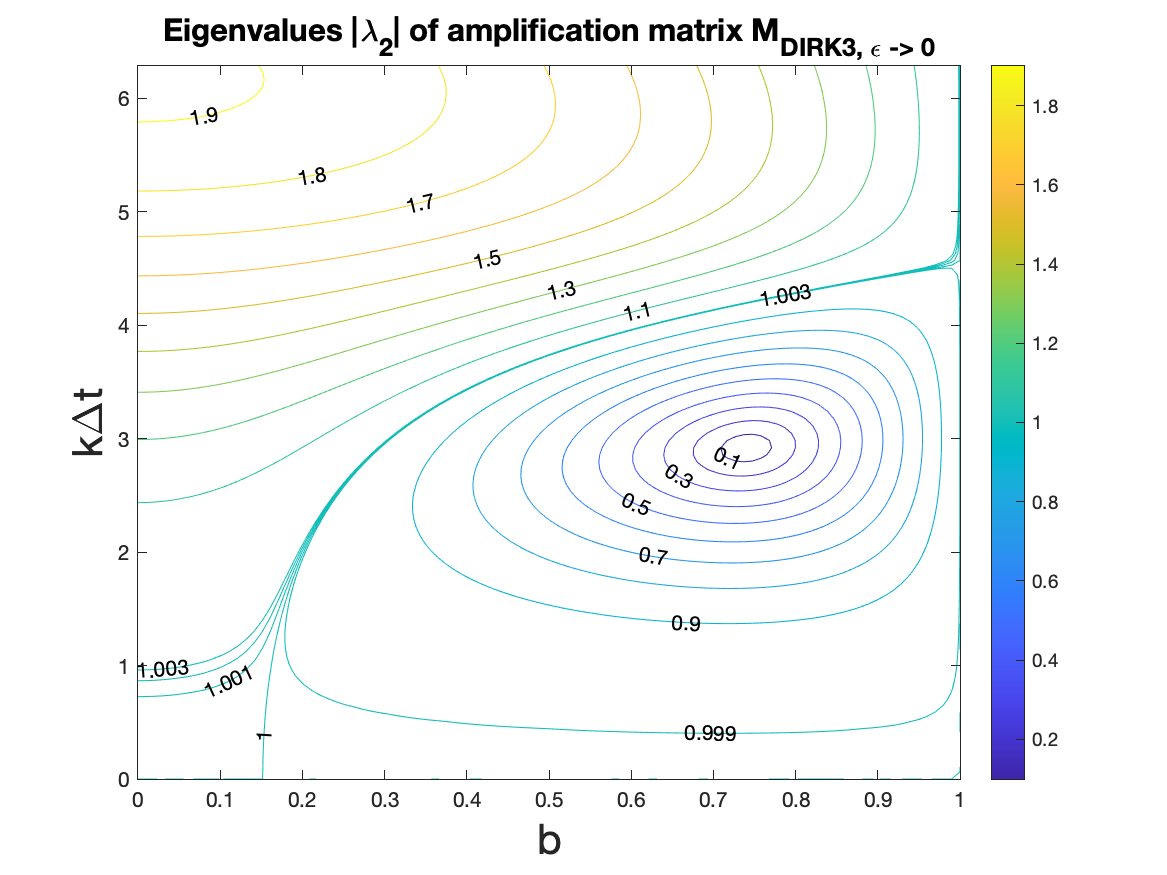}
\label{fig:dirk3_3s_limiting}
}
\subfigure[Table~\ref{tab:rw_1}]{
\centering
\includegraphics[width=0.3\linewidth]{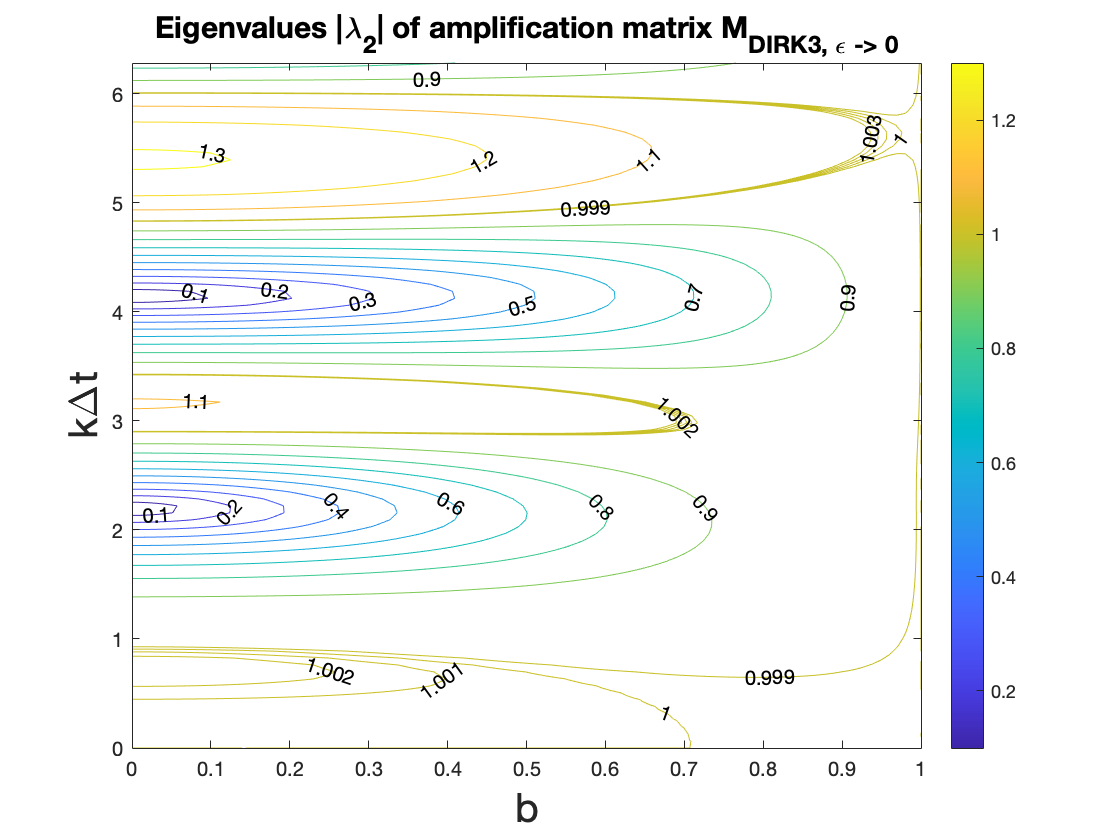}
\label{fig:dirk3_3_limiting}
}
\subfigure[Table~\ref{tab:rw_3}]{
\centering
\includegraphics[width=0.3\linewidth]{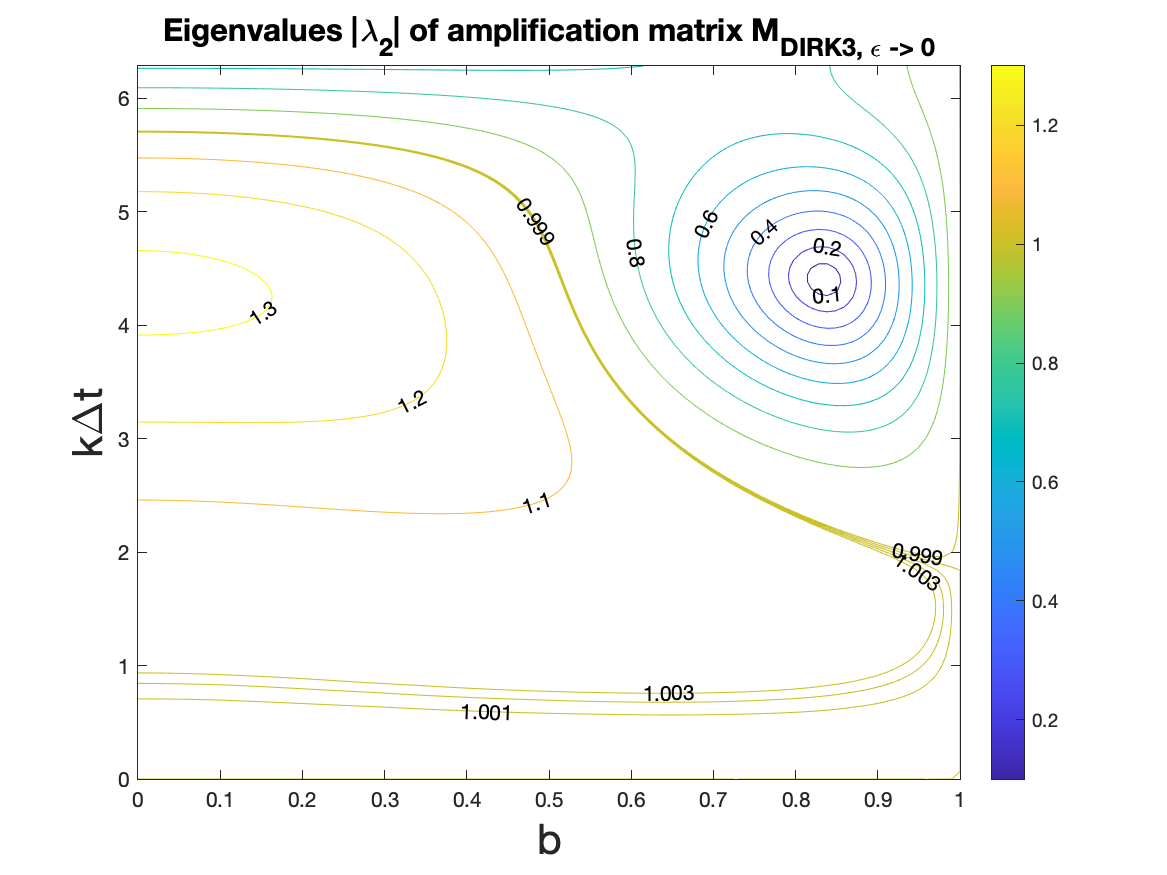}
\label{fig:dirk3_5_limiting}
}
\subfigure[Table~\ref{tab:rw_4}]{
\centering
\includegraphics[width=0.3\linewidth]{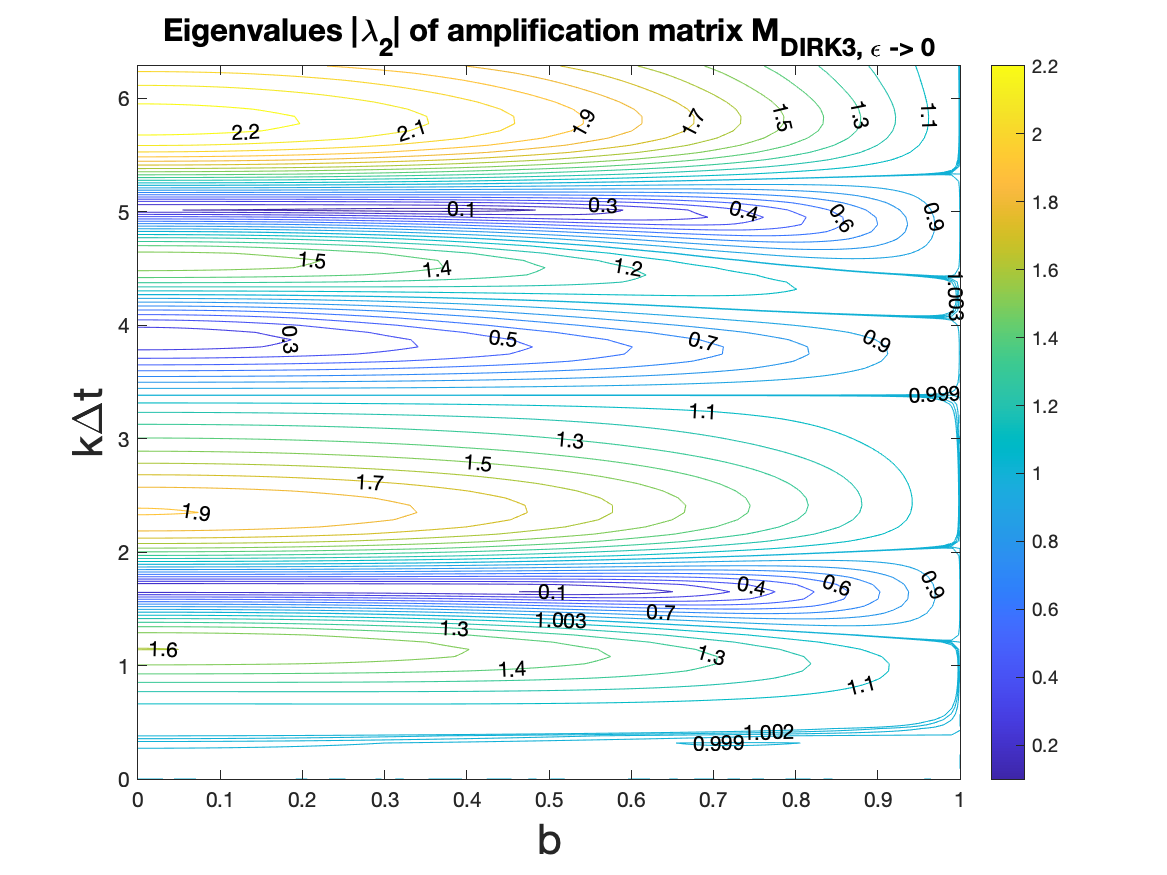}
\label{fig:dirk3_6_limiting}
}
\hspace{0.5cm}
\subfigure[Table~\ref{tab:rw_5}]{
\centering
\includegraphics[width=0.3\linewidth]{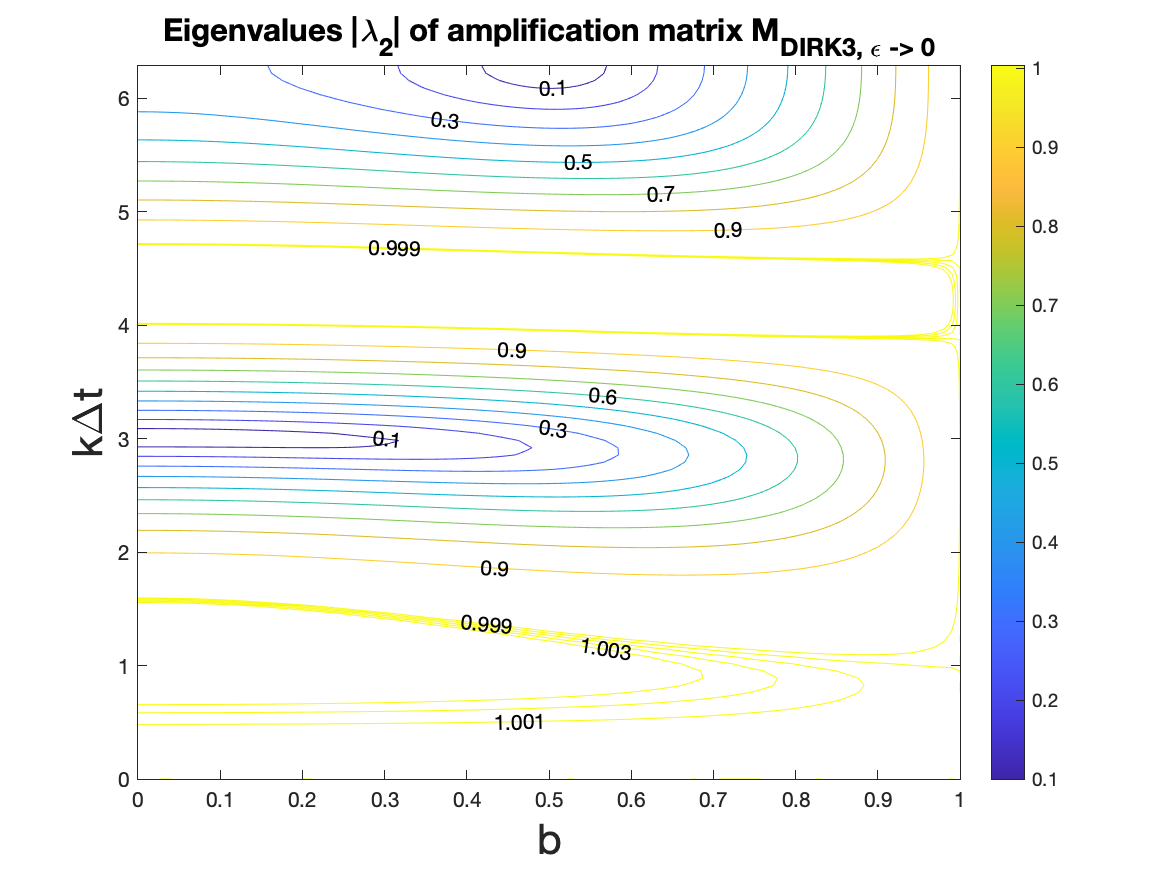}
\label{fig:dirk3_7_limiting}
}
\subfigure[Table~\ref{tab:rw_6}]{
\centering
\includegraphics[width=0.3\linewidth]{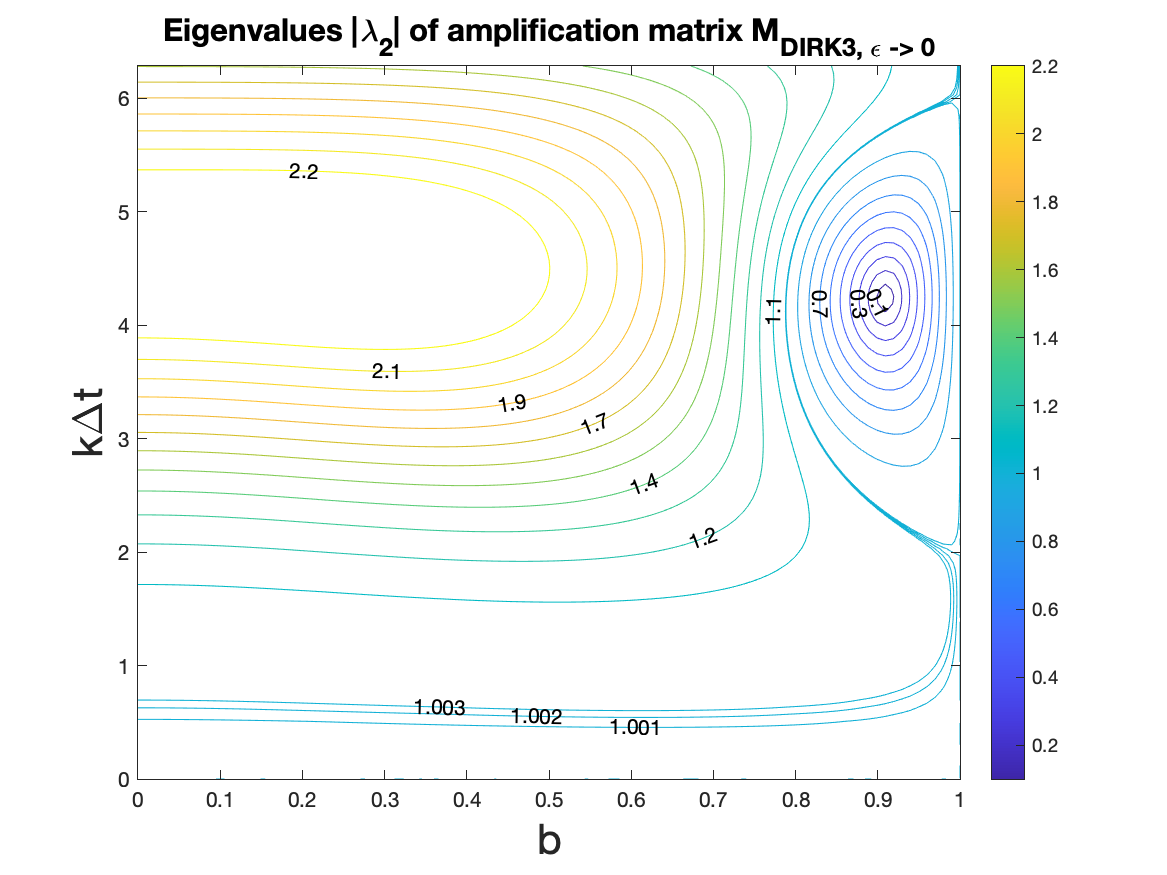}
\label{fig:dirk3_8_limiting}
}
\subfigure[Table~\ref{tab:rw_7}]{
\centering
\includegraphics[width=0.3\linewidth]{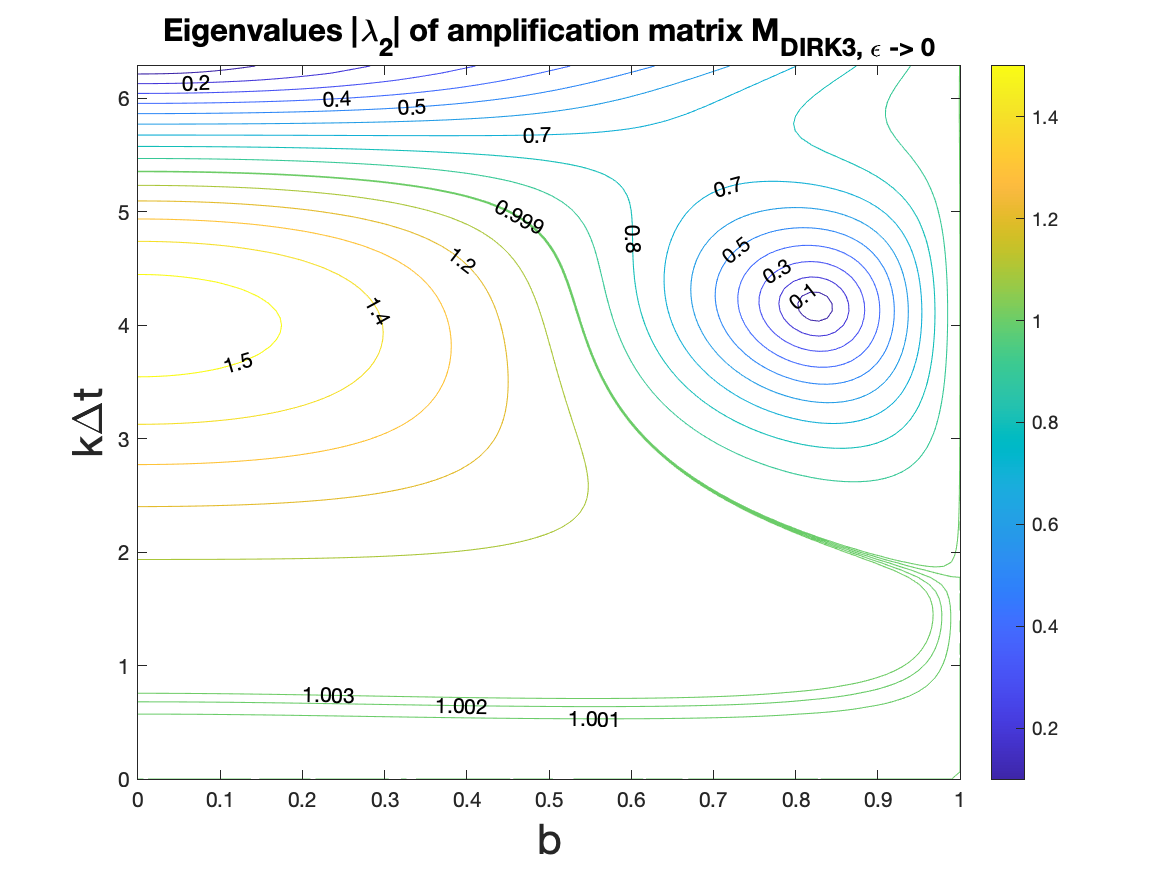}
\label{fig:dirk3_9_limiting}
}
\hspace{0.5cm}
\subfigure[Table~\ref{tab:rw_8}]{
\centering
\includegraphics[width=0.3\linewidth]{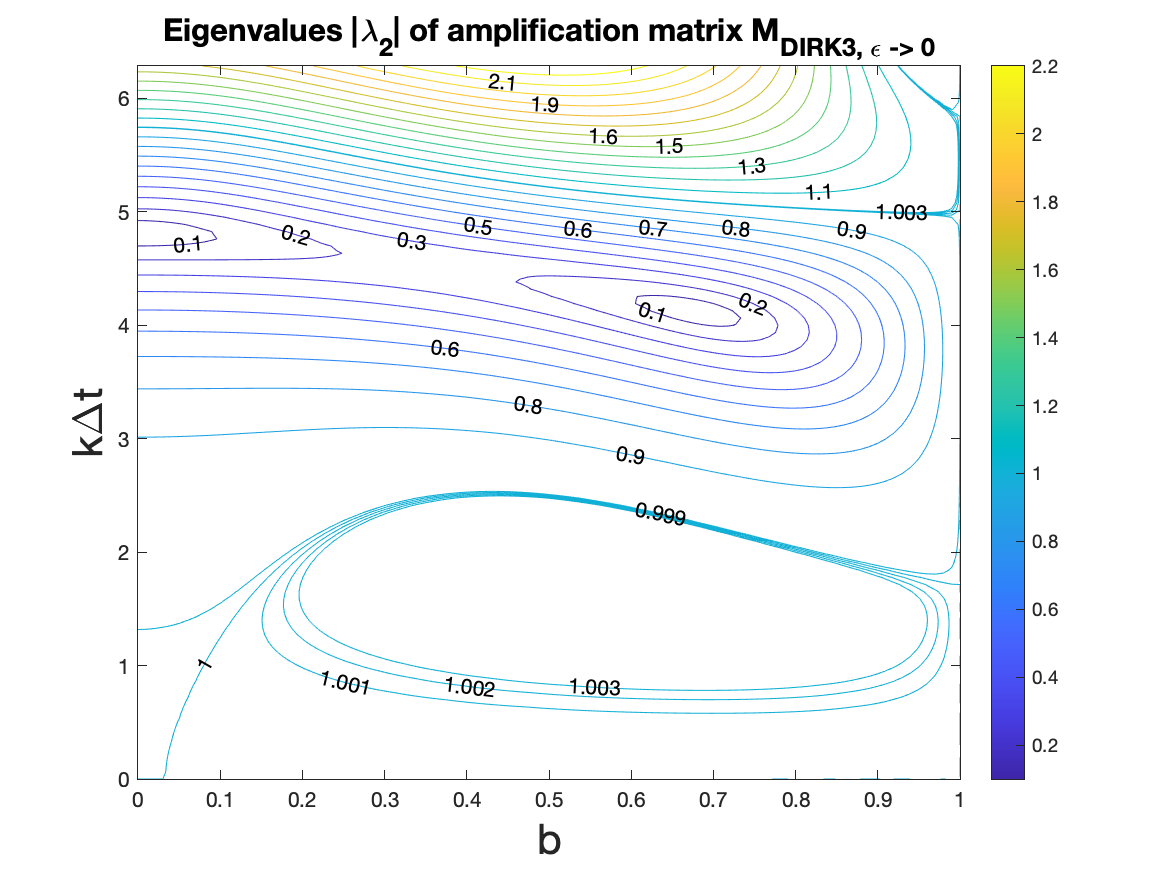}
\label{fig:dirk3_10_limiting}
}
\subfigure[Table~\ref{tab:rw_9}]{
\centering
\includegraphics[width=0.3\linewidth]{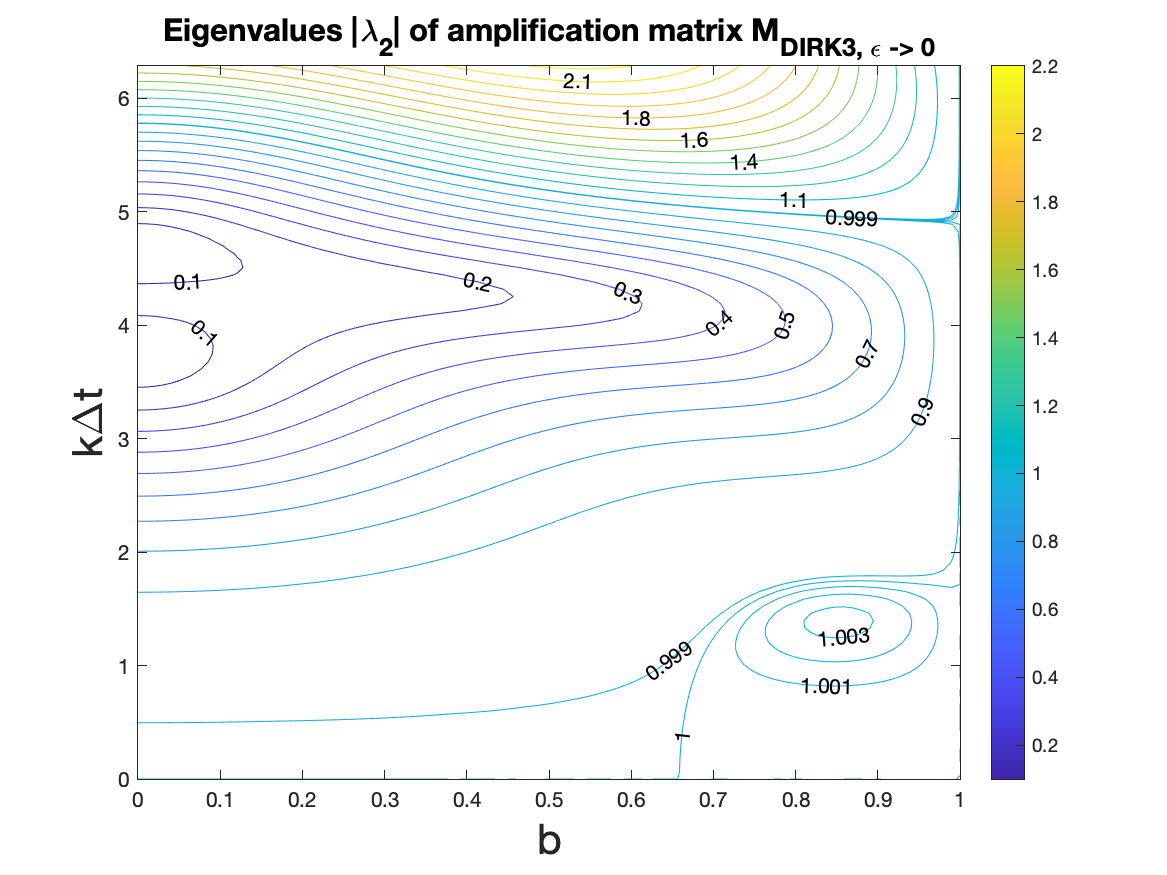}
\label{fig:dirk3_11_limiting}
}
\caption{Contour plot of the eigenvalues $\vert\lambda_2\vert$ for $M_{DIRK3, \ep \ra 0}$ versus $b\in [0, 1]$ and $k\Dt \in [0, 2\pi]$ for the DIRK3 method in Table~\ref{tab_dirk3} - \ref{tab:rw_9} as $\xi = \f{\Dt}{\ep} \ra \infty$.}
\label{fig:dirk3_4s_limiting}
\end{figure}
\begin{figure}[ht]
\centering
\subfigure[$|\lambda_1|$]{
\centering
\includegraphics[width=0.45\linewidth]{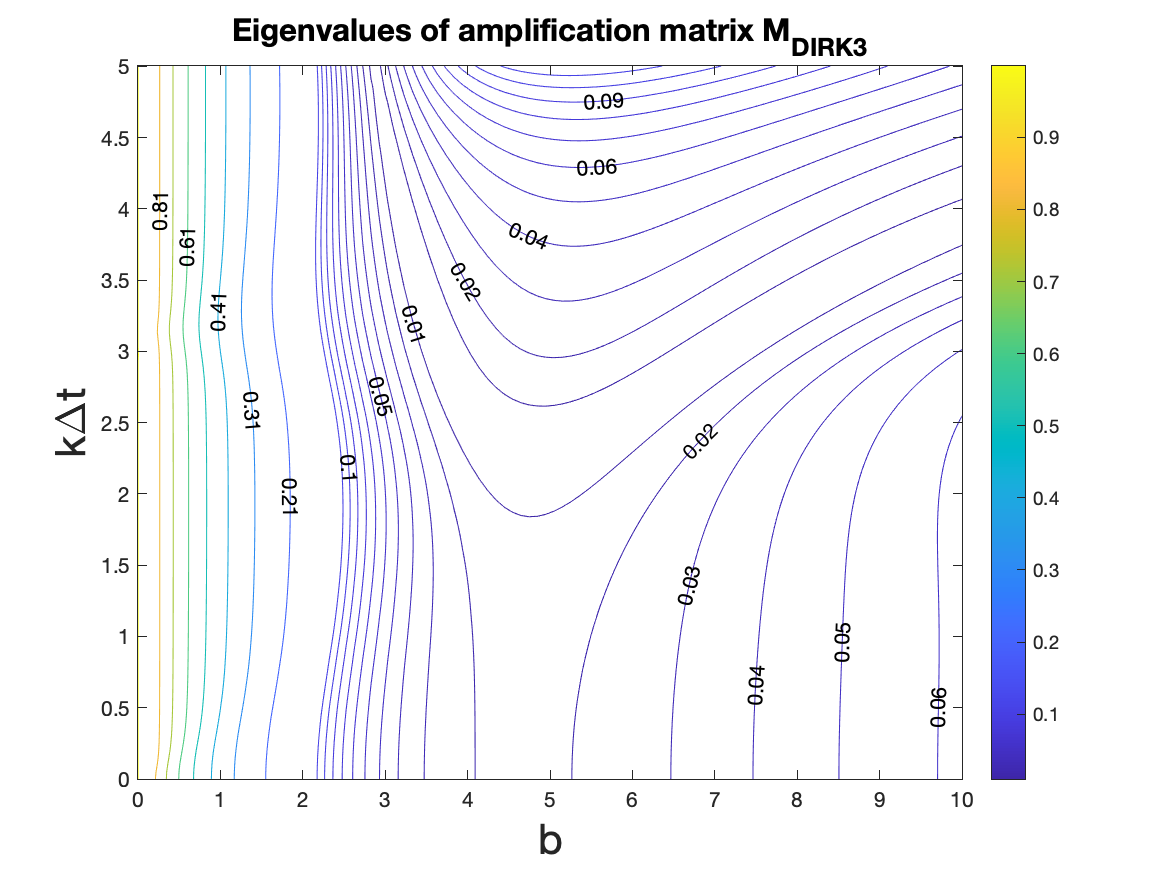}
}
\subfigure[$|\lambda_2|$]{
\centering
\includegraphics[width=0.45\linewidth]{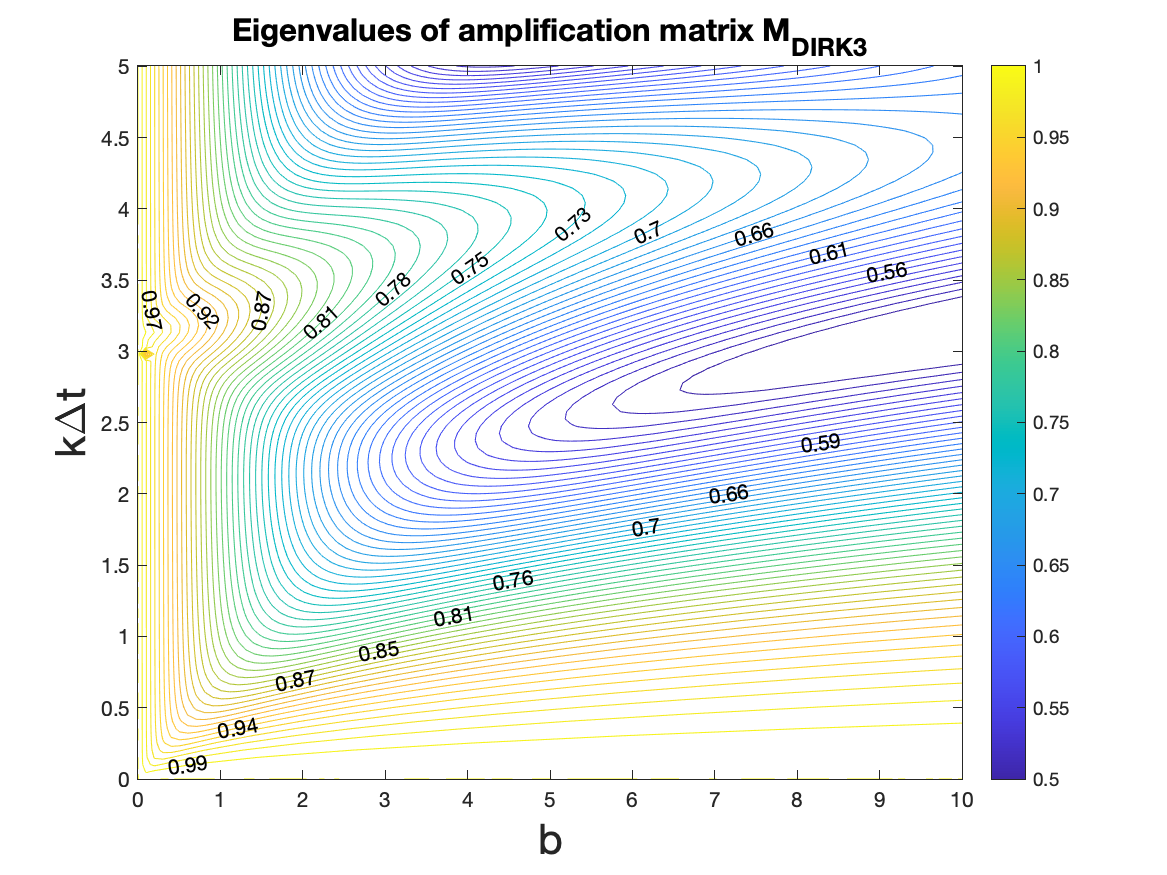}
}
\caption{Contour plot of the eigenvalues $|\lambda_{1, 2}|$ with $k\Dt \in [0, 1.5924\pi]$ and $\xi = \f{\Dt}{\ep} \in [0, 10]$ for the amplification matrix $M_{DIRK3}$ of the $4$-stage DIRK3 method in Table~\ref{tab:rw_9}. $b = 0.6$ in \eqref{eq:sys_kinetic}. 
Note that we choose the range of $\xi \in [0, 10]$ as an example, we checked that for $\xi>10$, the eigenvalues are bounded by $1$ as well. 
}
\label{fig:contour_dirk3_11_b610}
\end{figure}

\section{Numerical Tests}
\label{sec:test}
\setcounter{equation}{0}

In this section, we apply the SL NDG-DIRK methods proposed in \cite{ding2021semi} to test the classical and asymptotic temporal order of convergence for proposed DIRK methods via stiff linear/nonlinear hyperbolic relaxation systems and the kinetic BGK model. For the SL NDG method, we use a third order SL NDG spatial discretization with a well-resolved mesh of $640$ uniformly spaced elements, to minimize the spatial error. Unless otherwise noted, periodic boundary conditions are used for all tests. We also use the time stepping size as
$
\Dt = \text{CFL}\cdot\f{\Dx}{ a}
$
where $a$ is the maximum transport speed, and CFL values are to be specified for each test. 

\begin{exa} \label{exa:linear} \cite{hu2019uniform}
Consider the linear two-velocity model~\eqref{twovelo} with $b = 0.6$ on $x\in [0, 1]$, and the initial condition given by
\[
u(x, 0) = \exp(\sin 2\pi x), \quad v(x, 0) = b\exp(\sin 2\pi x).
\]
We test the temporal convergence of different DIRK methods, by plotting in Figure~\ref{fig:linear_b610_temp_order} $L^1$ errors versus the CFL numbers for both $\ep = 10^{-2}$ and $10^{-6}$ at $T = 0.2$. When $\ep = 10^{-2}$, we observe expected classical orders of temporal convergence for various temporal integrators, with $CFL$ as large as $16$; while for $\ep = 10^{-6}$, we observe only second order temporal convergence for the 3-stage DIRK3 scheme, as shown in Theorem~\ref{thm3} together with the Remark~\ref{rem: dirk3}. However, numerical instability is observed when $CFL$ is greater than $1$. Such instability, may be due to the shift of characteristics feet across one computational cell from DG spatial discretization, and is not covered in our current analysis. 

\begin{figure}[htbp]
\centering
\subfigure[\scriptsize{$\ep = 10^{-2}$}]{
\centering
\includegraphics[width=0.4\linewidth]{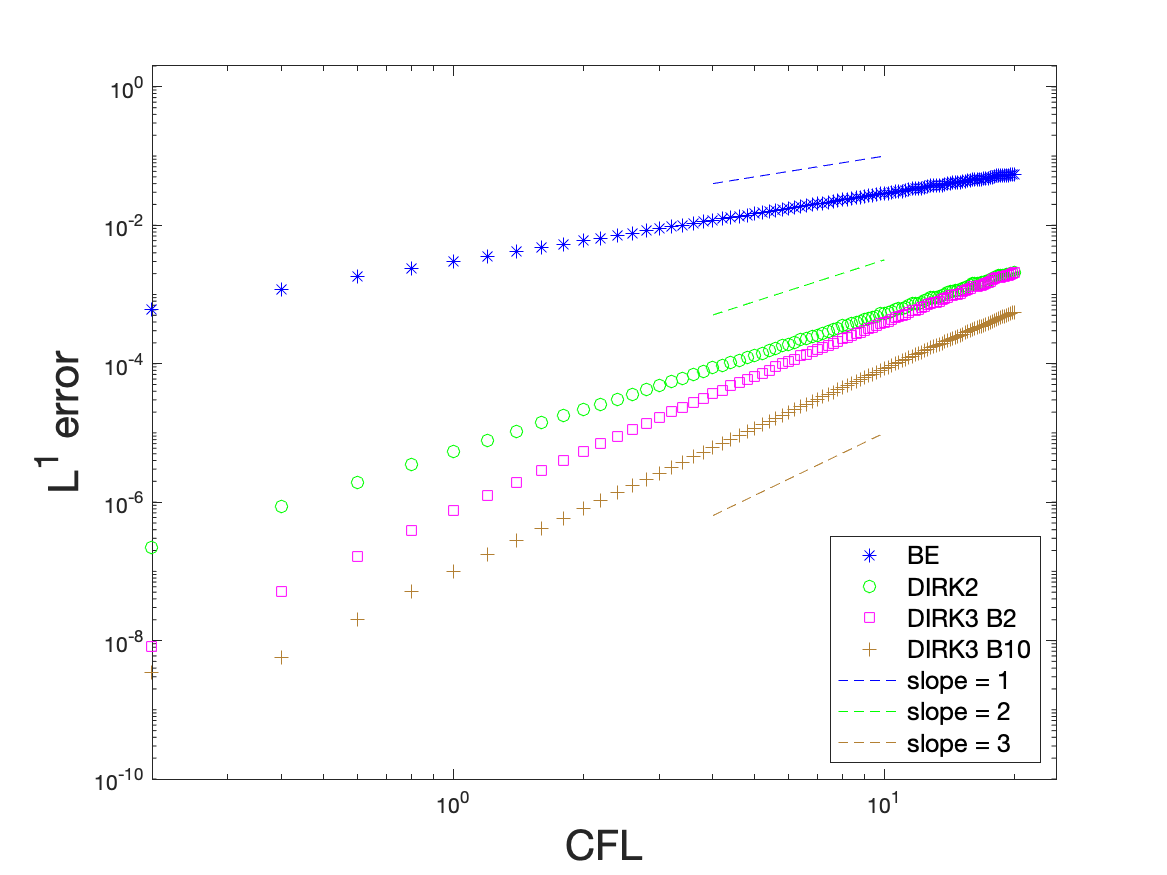}
\label{fig:linear_b610_tau2}
}
\subfigure[\scriptsize{$\ep = 10^{-6}$}]{
\centering
\includegraphics[width=0.4\linewidth]{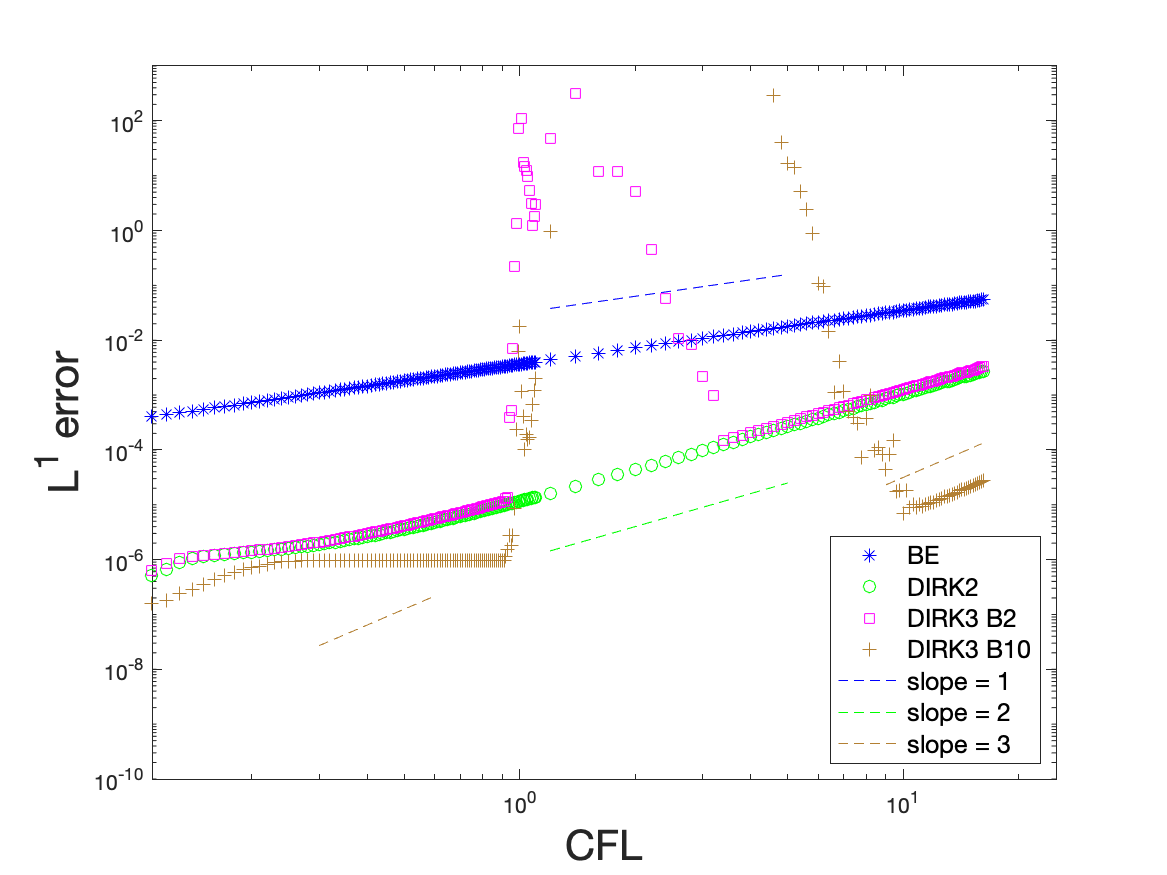}
\label{fig:linear_b610_tau6}
}
\caption{Plots of $L^1$ error versus CFL on fixed mesh $N_x = 640$ for \eqref{twovelo} with $b = 0.6$ at $T = 0.2$. The reference solution is obtained from the corresponding numerical solution with CFL $= 0.001$. The DIRK3 B2 refers to the $3$-stage DIRK3 method with Table~\ref{tab_dirk3} and DIRK3 B10 refers to the $4$-stage DIRK3 method with Table~\ref{tab:rw_9}. Line segments of slope 1, 2 and 3 are also provided as reference.}
\label{fig:linear_b610_temp_order}
\end{figure}

\end{exa}


\begin{exa} \label{exa:nonlinear}
Consider the nonlinear hyperbolic relaxation system in \cite{hu2019uniform},
\beq \label{eq:nonlinear}
\begin{cases}
\df_t u + \df_x v = 0,	\\
\df_t v + \df_x u = \f{1}{\ep}(bu^2 - v),	
\end{cases}
\eeq
with $b = 0.2$ on $x\in [0, 1]$. When $\ep \ra 0$, the limit of \eqref{eq:nonlinear} is the Burger's equation $\partial_t u + b\partial_x(u^2) = 0$. Let $f = (f_1, f_2)^T$ where $f_1 = \f{u+v}{2}, f_2 = \f{u-v}{2}$, then \eqref{eq:nonlinear} can be reformulated into the two-discrete velocity model:
\beq \label{eq:nonlinear2}
\df_t f + v\df_x f = \f{1}{\ep}(M_U - f)
\eeq
with $v \in \Omega_v = \{-1, 1\}$ and the local equilibrium $M_U = (\f{bu^2 + u}{2}, \f{-bu^2+u}{2})^T. $\eqref{eq:nonlinear} has only one collision invariant $\phi(v) = 1$. The initial condition for $u$ and $v$ are given by $u(x, 0) = \f{1}{2}\exp(\sin 2\pi x), \quad v(x, 0) = bu^2(x, 0)$. The final simulation time $T = 0.2$. In Figure~\ref{fig:nonlinear_temp_order}, we plot $L^1$ errors versus the CFL numbers for both $\ep = 10^{-2}$ and $10^{-6}$. When $\ep = 10^{-2}$, classical order of convergences are observed for various DIRK schemes with CFL as large as $16$; when $\ep = 10^{-6}$, only second order convergence is observed for the 3-stage DIRK3 \eqref{tab_dirk3}, while third order convergence is observed for the 4-stage DIRK3 \eqref{tab:rw_9}. Numerical instability again shows up when for CFL around $1$. Similar observation are made as the previous example, which is not covered in our current analysis.  

\begin{figure}[htbp]
\centering
\subfigure[\scriptsize{$\ep = 10^{-2}$}]{
\centering
\includegraphics[width=0.4\linewidth]{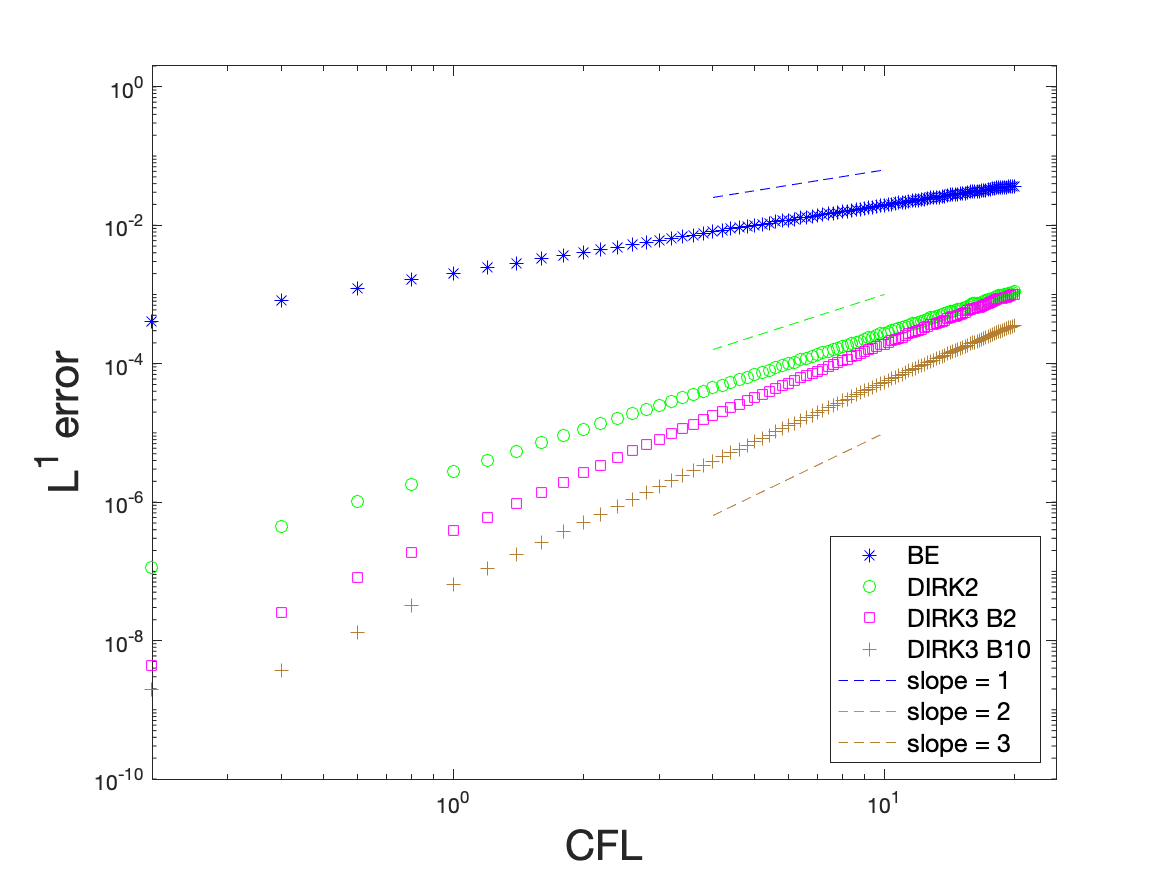}
\label{fig:nonlinear_tau2}
}
\subfigure[\scriptsize{$\ep = 10^{-6}$}]{
\centering
\includegraphics[width=0.4\linewidth]{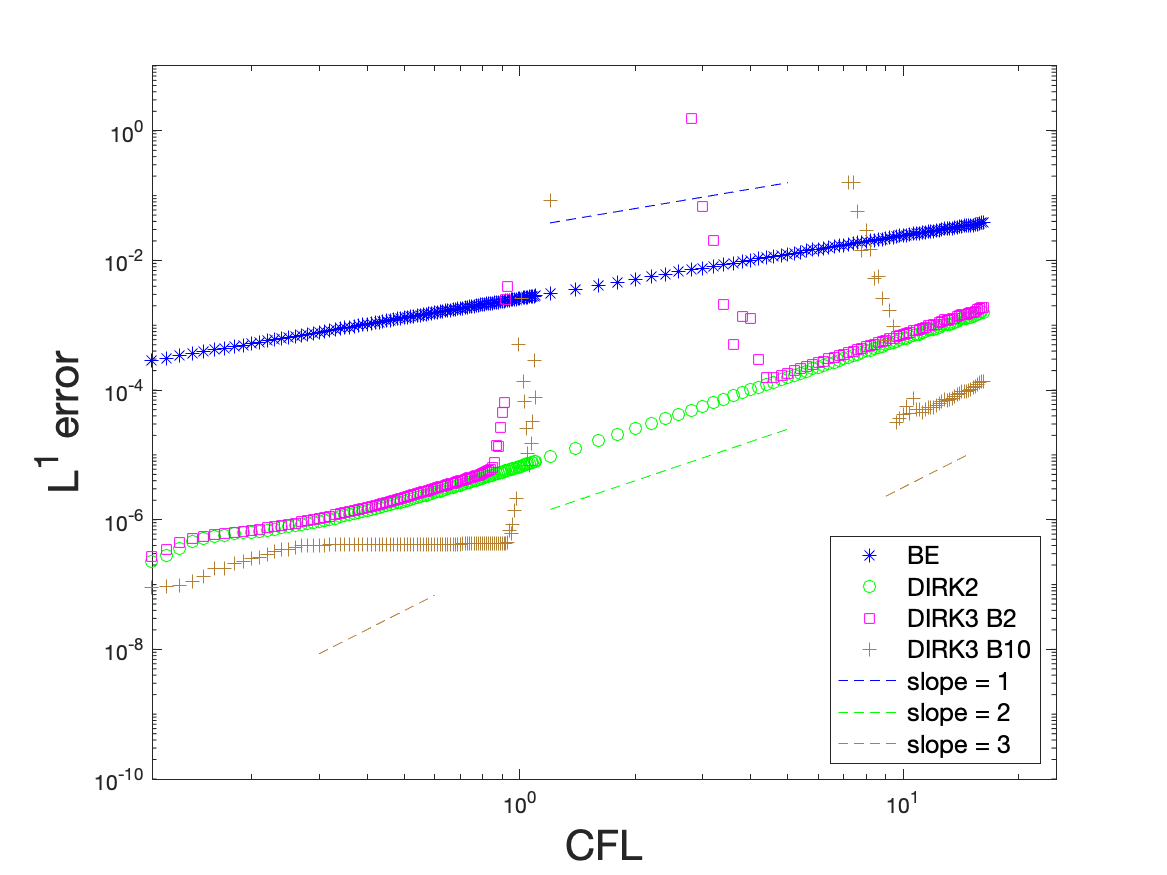}
\label{fig:nonlinear_tau6}
}
\caption{
Plots of $L^1$ error versus CFL on fixed mesh $N_x = 640$ for \eqref{eq:nonlinear2} at $T = 0.2$. The reference solution is obtained from the corresponding numerical solution with CFL $= 0.001$. The DIRK3 B2 refers to the $3$-stage DIRK3 method with Table~\ref{tab_dirk3} and DIRK3 B10 refers to the $4$-stage DIRK3 method with Table~\ref{tab:rw_9}. Line segments of slope 1, 2 and 3 are also provided as reference.}
\label{fig:nonlinear_temp_order}
\end{figure}

\end{exa}


\begin{exa} \label{exa:consistent}
Consider 1D1V BGK equation~\eqref{eq: bgk} with the consistent initial distribution in \cite{pieraccini2007implicit}
\beq \label{test1}
f(x, v, 0) = \f{\rho_0}{\sqrt{2\pi T_0}} \exp\left(-\frac{(v-u_0(x))^2}{2 T_0}\right),  \quad  x\in[-1,1]
\eeq
and initial velocity 
\beq \label{test1_init}
u_0 = \f{1}{10}\left[\exp\left(-(10 x-1)^2\right) - 2 \exp\left(-(10 x+3)^2\right)\right].
\eeq
Initial density and temperature are uniform with constant values $\rho(x, 0) = \rho_0 = 1$ and $T(x, 0) = T_0 = 1$ respectively. The test is performed on the velocity domain $v \in [-15 ,15]$, which is uniformly discretized with $N_v = 100$ grid points. The temporal accuracy the SL NDG-DIRK method \cite{ding2021semi} when applied to \eqref{test1} with $\ep = 10^{-2}$ and $10^{-6}$ at  $T=0.04$ are given in Figure~\ref{fig:test1_temp_order}. Different CFL numbers ranging from $0.2$ to $16.20$ with uniform step size $0.2$ are used for both $\ep$ values. As the previous examples, classical order of convergence is observed for $\ep = 10^{-2}$ with large $CFL$ numbers. In the asymptotic limit with $\ep = 10^{-6}$, all schemes tested are observed to be stable for rather large $CFL$ numbers. The 
$4$-stage DIRK3 in Table~\ref{tab:rw_9} displays third order convergence, while the $3$-stage DIRK3 in Table~\ref{tab_dirk3} only have second order convergence. 

\begin{figure}[htbp]
\centering
\subfigure[\scriptsize{$\ep = 10^{-2}$}]{
\centering
\includegraphics[width=0.4\linewidth]{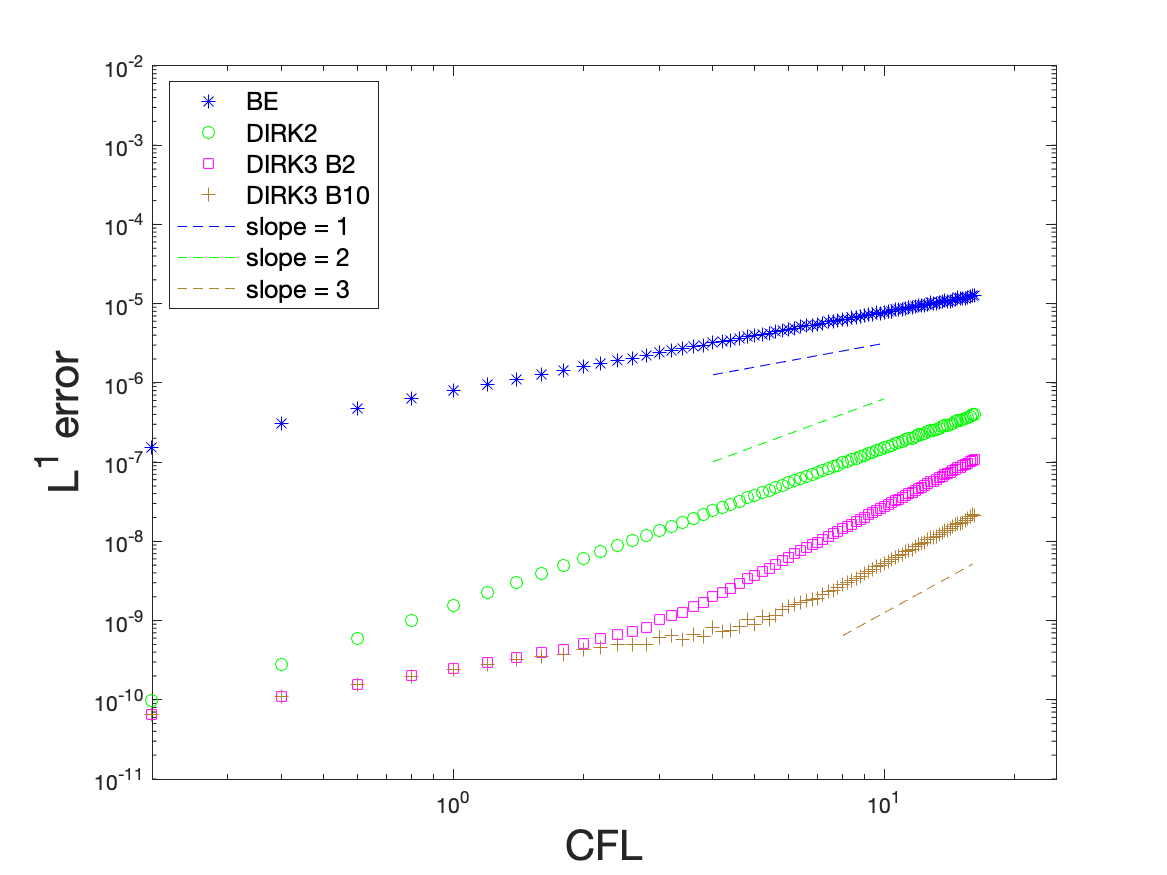}
}
\subfigure[\scriptsize{$\ep = 10^{-6}$}]{
\centering
\includegraphics[width=0.4\linewidth]{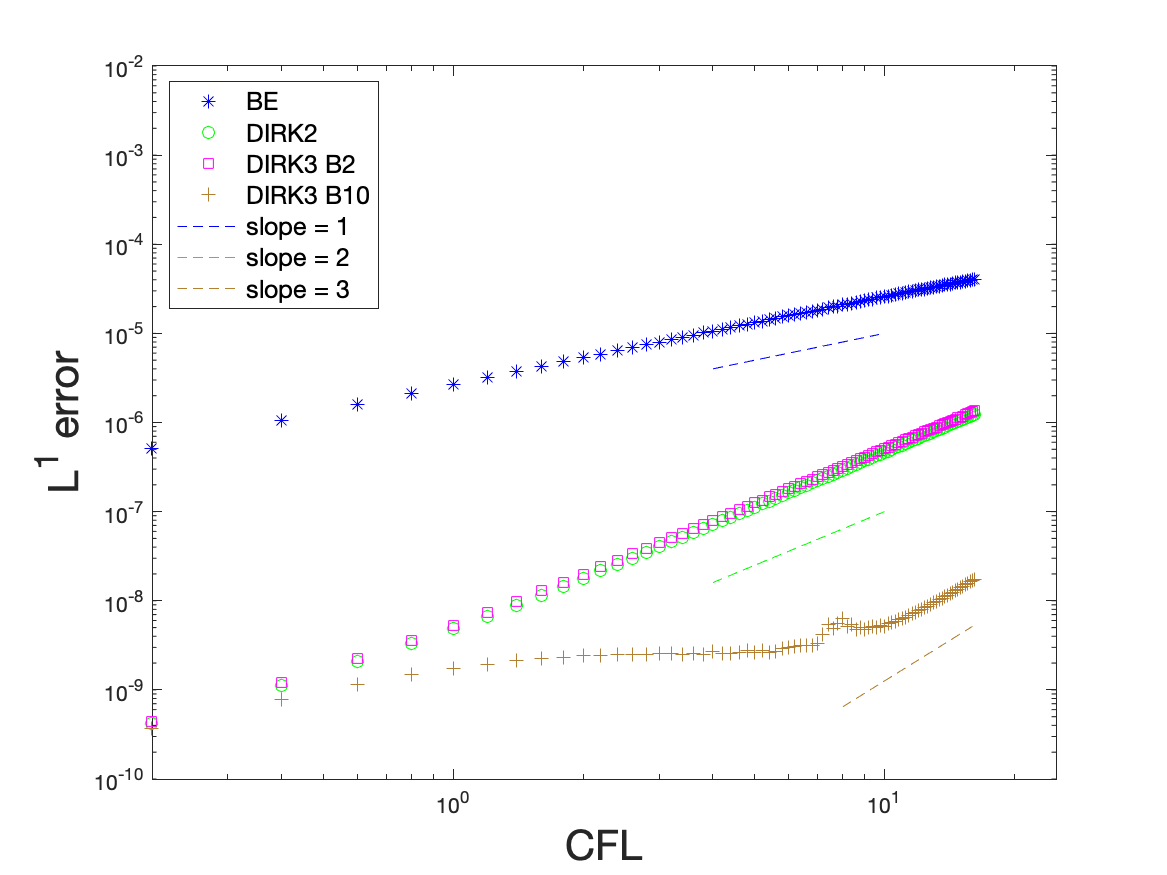}
}
\caption{
Plots of $L^1$ error versus CFL on fixed mesh $N_x = 640$ for \eqref{eq:nonlinear2} at $T = 0.2$. The reference solution is obtained from the corresponding numerical solution with CFL $= 0.01$. The DIRK3 B2 refers to the $3$-stage DIRK3 method with Table~\ref{tab_dirk3} and DIRK3 B10 refers to the $4$-stage DIRK3 method with Table~\ref{tab:rw_9}. Line segments of slope 1, 2 and 3 are also provided as reference.}
\label{fig:test1_temp_order}
\end{figure}
\end{exa}

\section{Conclusions}	
\label{sec6}
\setcounter{equation}{0}

In this paper, we study the asymptotic accurate property of the DIRK methods when they are coupled with semi-Lagrangian solvers for stiff hyperbolic relaxation systems and kinetic BGK model. By performing an accuracy analysis in the asymptotic limit of $\ep \rightarrow 0$, an additional order condition is derived for third order accuracy of DIRK methods in the limiting fluid regime. In additional, linear stability analysis of DIRK schemes is performed on a linear two-velocity model via Fourier analysis, when the spatial operations are kept continuous. Numerical instabilities show up for the limiting fluid regime, with $CFL$ greater than $1$. Further study of linear stability of fully discrete DIRK schemes when coupled with discontinuous Galerkin spatial discretization is open and optimization of DIRK methods for better linear and nonlinear stability, are subject to our future work.

\section*{Appendix A: Proof of the equivalence between \eqref{eq: bgk} and \eqref{eq: bgk_sl1}}

\setcounter{equation}{0}
\renewcommand{\theequation}{A\arabic{equation}}

The BGK equation~\eqref{eq: bgk} and the ODE \eqref{eq: bgk_sl1} are equivalent as explained below. 
Write $g = g(x, v, t) = f(x + v t, v, t)$ which satisfies \eqref{eq: bgk}, we have
\beq \label{eq:app1}
\partial_t g		=	\f{1}{\ep} (M_U[f] - f)(x + vt, v, t).
\eeq
Since $M_U[f]$ and $f$ have the same moments as \eqref{eq:Mu_f}, then $M_U[f] = M_U[\langle f \phi(v)\rangle]$. 
Suppose the integration along $v$-direction is done with notation $w$. 
Then \eqref{eq:app1} gives
\begin{align}
\partial_t g		&= 	\frac{1}{\ep} (M_U[\langle f(t, x+vt, w) \phi(w) \rangle]-g)	\nonumber \\
			&= 	\frac{1}{\ep} (M_U[\langle g(x + vt - wt, w, t) \phi(w) \rangle]-g) \label{eq:app2}.
\end{align}
\eqref{eq:app2} cannot be written into the ODE form as (\ref{ode_Q}) directly. 
Instead, it needs to be written into a more general vector form, let $\tilde{\mQ} (t, g) := \frac{1}{\ep} \left(M_U[\langle g(x + vt - wt, w, t) \phi(w)\rangle]-g \right)$, then
\begin{equation} \label{eq:app3}
\frac{\rd}{\rd{t}}(t,g)^T = (1,\tilde{\mQ} (t, g))^T,
\end{equation}
where $\frac{\rd}{\rd{t}}$ denotes the material derivative along the characteristics. 
Now \eqref{eq:app3} is of form \eqref{ode_Q} with $F = (t, g)^T$ and $\mQ[F] = (1,\tilde{\mQ}(t, g))^T$.

Applying the f scheme \eqref{fsch} to $g$ in \eqref{eq:app3} is to say
\begin{align} 
	g^{(k)} 	&=	 g^n + \Delta t\sum_{j=1}^{k-1}a_{kj}\tilde{\mQ}(c_j\Delta t, g^{(j)}) + \Delta t a_{kk}\tilde{\mQ}(c_k\Delta t, g^{(k)}),\quad  k=1,\cdots s;	\label{eq:fsch_g}\\
	g^{n+1}	&=	g^{(s)}	\nonumber.
\end{align}
Without loss of generality, we let $n = 0$. Then for $k = 1, \cdots s$,
\begin{align*}
g^{(k)}(x, v) = g^0(x, v) &+ \Delta t \sum_{j=1}^{k-1} \frac{a_{kj}}{\ep} (M_U[\langle g^{(j)}(x + v c_j \Delta t - w c_j \Delta t, w) \phi(w)\rangle] - g^{(j)}(x, v))	\nonumber \\
				& + \Delta t \frac{a_{kk}}{\ep} (M_U[\langle g^{(k)}(x + v c_k \Delta t - w c_k \Delta t, w) \phi(w)\rangle] - g^{(k)}(x, v)).
\end{align*}
Now we write $f^{(k)}(x, v) = g^{(k)}(x-v c_k \Delta t, v)$ and get for $k = 1, \cdots s$,
\begin{align*} 
f^{(k)}(x,v) &= f^0(x-vc_k\Delta t, v)  \nonumber \\
		&+ \Delta t\sum_{j=1}^{k-1}\frac{a_{kj}}{\ep} (M_U[\langle f^{(j)}(x-vc_k\Delta t+vc_j\Delta t, w) \phi(w)\rangle]-f^{(j)}(x-vc_k\Delta t+vc_j\Delta t, v) 	\nonumber \\
		&+\Delta t \frac{a_{kk}}{\ep} (M_U[\langle f^{(k)}(x, w) \phi(w)\rangle]-f^{(k)}(x, v) 
\end{align*}
which is exactly \eqref{fsch}.

\section*{Appendix B: Butcher Tableaus of DIRK methods}

\setcounter{table}{0}
\renewcommand{\thetable}{B\arabic{table}}

\noindent 
{\bf {Classical 2-stage DIRK2 in Table~\ref{tab_dirk2} and 3-stage DIRK3 methods in Table~\ref{tab_dirk3}:}}

\begin{table}[!ht]	
\centering
\begin{tabular}{c	|		c	c   c}
$\nu$	&			&	$\nu$ 	&	0	\\
1		&			&	$1-\nu$ 	& $\nu$	\\
\hline
		&			&	$1-\nu$ 	& $\nu$
\end{tabular}
, \qquad $\nu = 1-\sqrt{2}/2$.
\caption{DIRK2.}
\label{tab_dirk2}
\end{table}


\begin{table}[htbp]
\centering
\begin{minipage}{.5\textwidth}
\centering
\begin{tabular}{c|c c c}
$\gamma$					& 	$\gamma$ 		 	&	  			&					\\
$\f{1+\gamma}{2}$				&	$\f{1-\gamma}{2}$ 		& $\gamma$		&	 	 			\\
		1					&	$\beta_1$ 			& $\beta_2$		&	$\gamma$		\\
\hline
 							& 	$\beta_1$				& $\beta_2$		& 	$\gamma$		\\
\end{tabular}
,
\end{minipage}
\begin{minipage}{.4\textwidth}
$\gamma \approx 0.435866521508459,$\\
$\beta_1 = -\f{3}{2} \gamma^2+4\gamma-\f{1}{4},$
$\beta_2 = \f{3}{2} \gamma^2-5\gamma+\f{5}{4}$
\end{minipage}
\caption{$3$-stage DIRK3.}
\label{tab_dirk3}
\end{table}

\noindent
{\bf {Proposed DIRK3 methods in Table~\ref{tab:rw_1}-\ref{tab:rw_9} satisfying the order conditions $G_s = 1/6$ in Theorem~ \ref{thm3}:}}

\begin{table}[htbp] \scriptsize
\centering
\begin{tabular}{c|cccc}
1.482285978970554		&	1.482285978970554     \\
0.840649305846235		&	-0.6416366731243188 		& 1.482285978970554     \\
0.369773737448817		&	0.849139645385794 		& -1.961651886907531  	& 1.482285978970554     \\
1					&	-0.1539440520308502 		& -1.343634476018696 	& 1.015292549078992	 & 1.482285978970554     \\
\hline
					& 	-0.1539440520308502 		& -1.343634476018696 	& 1.015292549078992 	& 1.482285978970554
\end{tabular}
\caption{$4$-stage DIRK3}
\label{tab:rw_1}
\end{table}

\begin{table}[htbp] \scriptsize
\centering
\begin{tabular}{c|cccc}
0.1376586577601238	&	0.1376586577601238     \\
0.5601286538192144	&	0.4224699960590905 	& 0.13765865776012381    \\
0.6273053597634042	&	0.3693098698936377 	& 0.1203368321096427  		& 0.1376586577601238     \\
1					&	0.330756291090243		& 0.2479472066914047 		& 0.2836378444582285 	& 0.1376586577601238     \\
\hline
					& 	0.330756291090243 	& 0.2479472066914047 		& 0.2836378444582285 	& 0.1376586577601238
\end{tabular}
\caption{$4$-stage DIRK3}
\label{tab:rw_3}
\end{table}

\begin{table}[htbp] \scriptsize
\centering
\begin{tabular}{c|cccc}
4.025563222205342		&	4.025563222205342     \\
2.891263084714272		&	-1.13430013749107 		& 4.025563222205342    \\
1.871613091897857		&	0.8450375691764959 	& -2.998987699483981 	& 4.025563222205342     \\
1					&	-1.33950660036402 		& 4.925563641076701 	& -6.611620262918024 	& 4.025563222205342     \\
\hline
					& -1.33950660036402 		& 4.925563641076701 	& -6.611620262918024 	& 4.025563222205342
\end{tabular}
\caption{$4$-stage DIRK3}
\label{tab:rw_4}
\end{table}

\begin{table}[htbp] \scriptsize
\centering
\begin{tabular}{c|cccc}
$\f{1}{2}$			&	$\f{1}{2}$			&					&					&				\\
$\f{1}{4}$			&	$-\f{1}{4}$			&	$\f{1}{2}$			&					&				\\
$\f{3}{2}$			&		$-1$			&		$2$			&	$\f{1}{2}$			&				\\
	$1$			&	$-\f{1}{12}$		&	$\f{2}{3}$			&	$-\f{1}{12}$		&	$\f{1}{2}$		\\
\hline
				&	$-\f{1}{12}$		&	$\f{2}{3}$			&	$-\f{1}{12}$		&	$\f{1}{2}$		\\
\end{tabular}
\caption{$4$-stage DIRK3}
\label{tab:rw_5}
\end{table}


\begin{table}[!htbp] \scriptsize
\centering
\begin{tabular}{c|c c c c}
0.153198102889014		&	0.153198102889014		&						&						&			\\
0.601231025797714		&	0.448032922908699		&	0.153198102889014		&						&			\\
0.174793845034303		&	0.0					&	0.021595742145288		&	0.153198102889014		&			\\
	$1$				&	0.0					&	0.466155735240408		&	0.380646161870577		&	0.153198102889014	\\
\hline
					&	0.0					&	0.466155735240408		&	0.380646161870577		&	0.153198102889014	\\
\end{tabular}
\caption{$4$-stage DIRK3}
\label{tab:rw_6}
\end{table}


\begin{table}[!htbp] \scriptsize
\centering
\begin{tabular}{c|c c c c}
0.193031472980198		&	0.193031472980198			&						&						&				\\
0.087206714188908		&	-0.105824758791290		&	0.193031472980198		&						&				\\
0.479857673328132		&	0.0						&	0.286826200347934		&	0.193031472980198		&				\\
1					&	0.0						&	0.204409312996206		&	0.602559214023597		&	0.193031472980198		\\
\hline
					&	0.0						&	0.204409312996206		&	0.602559214023597		&	0.193031472980198		\\
\end{tabular}
\caption{$4$-stage DIRK3}
\label{tab:rw_7}
\end{table}


\begin{table}[!htbp] \scriptsize
\centering
\begin{tabular}{c|c c c c}
0.127224858518235		&	0.127224858518235		&						&						&				\\
0.331603489550386		&	0.204378631032151		&	0.127224858518235		&						&				\\
0.989624239986447		&	0.0					&	0.862399381468212		&	0.127224858518235		&				\\
1					&	0.0					&	0.746092420734223		&	0.126682720747542		&	0.127224858518235		\\
\hline
					&	0.0					&	0.746092420734223		&	0.126682720747542		&	0.127224858518235		\\
\end{tabular}
\caption{$4$-stage DIRK3}
\label{tab:rw_8}
\end{table}


\begin{table}[!htbp] \scriptsize
\centering
\begin{tabular}{c|c c c c}
$\f{1}{4}$			&	$\f{1}{4}$			&				&				&				\\
$\f{11}{28}$		&	$\f{1}{7}$			&	$\f{1}{4}$		&				&				\\
$\f{1}{3}$			&	$\f{61}{144}$		&	$-\f{49}{144}$	&	$\f{1}{4}$		&				\\
1				&	0				&	0			&	$\f{3}{4}$		&	$\f{1}{4}$		\\
\hline
				&	0				&	0			&	$\f{3}{4}$		&	$\f{1}{4}$		\\
\end{tabular}
\caption{$4$-stage DIRK3}
\label{tab:rw_9}
\end{table}


\section*{Appendix C: SL NDG scheme}

\setcounter{equation}{0}
\renewcommand{\theequation}{C\arabic{equation}}

For completeness, we briefly review the SL NDG scheme applied for solving \eqref{eq:1} in Section~\ref{sec:test}.  The detailed procedure can be found in \cite{ding2021semi}. Recall~\eqref{eq:1}:
\beq
\partial _{t}f + v\cdot \nabla _{x}f=  \frac{1}{\ep}\mQ(f) = \frac{1}{\ep}(M_U-f)
\eeq
and its f scheme:
\beq \label{app:fscheme}
f^{(k)}(x, v) = f^n(x - c_kv\Dt)	
		 + \f{\Dt}{\ep}\sum_{j=1}^{k}a_{k j}\left(M^{(j)}_U - f^{(j)}\right)(x - (c_k-c_j)v\Dt), \quad  k =1, \cdots s.
\eeq
We use the SL NDG method in \cite{ding2021semi} to approximate $f^n$ at upstream characteristic foot $x - c_kv\Dt$ and the intermediate $M^{(j)}_U - f^{(j)}\ (j = 1, \cdots k)$ values at $x - (c_k-c_j)v\Dt$. Note that this approximation process is designed based on the moment-based SLDG scheme in \cite{cai2017high} and it has the mass conservation property. In order to distinguish with evaluating the point values directly, we introduce 
\beq
\text{SL NDG}(v, (c_k-c_j)\Dt)\{f^{(j)}(x, v)\} = f^{(j)}(x - (c_k-c_j)v\Dt), \qquad 1\leq j < k\leq s
\eeq
to represent the SL NDG procedure.Then $f^{(k)}(x, v)$ in \eqref{app:fscheme} can be rewritten as
\beq
f^{(k)}(x, v)  = \text{SL NDG}(v, c_k\Dt)\{f^n(x, v)\} + \f{\Dt}{\ep} \sum_{j=1}^{k}a_{k j}\text{SL NDG}(v, (c_k - c_j)\Dt)\left\{\left(M_U^{(j)}-f^{(j)}\right)(x, v)\right\}.
\eeq
In \cite{ding2021semi}, $f^{(k)}\ (k = 1, \cdots s)$ are obtained by the following procedure:
\begin{itemize}

\item Predict 
\beq
\label{eq: prediction}
f^{*, (k)}(x, v) =  \text{SL NDG}(v, c_k\Dt)\{f^n(x, v)\} + \f{\Dt}{\ep} \sum_{j=1}^{k-1}a_{k j}\text{SL NDG}(v, (c_k - c_j)\Dt)\left\{\left(M_U^{(j)}-f^{(j)}\right)(x, v)\right\}.
\eeq

\item Compute the local equilibrium $M_U^{*, (k)}(x, v)$ using $f^{*, (k)}(x, v)$ for hyperbolic system and BGK model. 

\item Update the nodal value $f^{(k)}(x, v)$ by
\beq
f^{(k)}(x, v) = \f{\ep f^{*, (k)}(x, v) + \Dt M_U^{*, (k)}(x, v)}{\left(\ep+\Dt \right) }.
\eeq
						
\end{itemize}
In the end, we have $f^{n+1}(x, v) = f^{(s)}(x, v)$.

\bibliographystyle{siam}
\bibliography{refer}

\end{document}